\documentclass[a4paper,10pt]{article}
\usepackage[francais]{babel}
\usepackage[latin1]{inputenc}
\usepackage[T1]{fontenc}
\usepackage[sans]{dsfont}
\usepackage{mathrsfs}
\usepackage{amssymb, amsmath, amsthm}
\usepackage[all]{xy}
\usepackage{multicol}
\usepackage{fullpage}

\makeatletter
\def\section{\@ifstar\unnumberedsection\numberedsection}
\def\numberedsection{\@ifnextchar[
  \numberedsectionwithtwoarguments\numberedsectionwithoneargument}
\def\unnumberedsection{\@ifnextchar[
  \unnumberedsectionwithtwoarguments\unnumberedsectionwithoneargument}
\def\numberedsectionwithoneargument#1{\numberedsectionwithtwoarguments[#1]{#1}}
\def\unnumberedsectionwithoneargument#1{\unnumberedsectionwithtwoarguments[#1]{#1}}
\def\numberedsectionwithtwoarguments[#1]#2{%
  \ifhmode\par\fi
  \removelastskip
  \vskip 5ex\goodbreak
  \refstepcounter{section}%
  \hbox to \hsize{\hss\vbox{\advance\hsize by 1cm
      \noindent
      \leavevmode\huge\bfseries\raggedright
      \thesection.\ 
      #2\par
      \vskip -2ex
      \noindent\hrulefill
      }}\nobreak
  \vskip 2ex\nobreak
  }
\def\unnumberedsectionwithtwoarguments[#1]#2{%
  \ifhmode\par\fi
  \removelastskip
  \vskip 5ex\goodbreak
  \hbox to \hsize{\hss\vbox{\advance\hsize by 1cm
      \noindent
      \leavevmode\huge\bfseries\raggedright
      #2\par
      \vskip -2ex
      \noindent\hrulefill
      }}\nobreak
  \vskip 2ex\nobreak
  }
\makeatother


\newtheoremstyle{THEOREME}{}{}{\sffamily}{}{\bfseries\scshape}{.}{.5em}{}
\theoremstyle{THEOREME}
\newtheorem{theoreme}{Th\'eor\`eme}[section]
\newtheorem{corollaire}[theoreme]{Corollaire}
\newtheorem{proposition}[theoreme]{Proposition}

\newtheoremstyle{DEFINITION}{0.5cm}{0.5cm}{\itshape}{}{\bfseries}{.}{.5em}{}
\theoremstyle{DEFINITION}
\newtheorem{definition}[theoreme]{D\'efinition}

\newtheoremstyle{REMARQUE}{}{}{\normalfont}{}{\bfseries}{.}{.5em}{}
\theoremstyle{REMARQUE}
\newtheorem{remarque}[theoreme]{Remarque}

\newtheoremstyle{LEMME}{}{}{\slshape}{}{\bfseries}{.}{.5em}{}
\theoremstyle{LEMME}
\newtheorem{lemme}[theoreme]{Lemme}

\input cyracc.def
\font\tencyr=wncyr6
\def\cyr{\tencyr\cyracc}
\newcommand{\cyrb}{{\cyr B}}

\newcommand{\SW}[1]{\textbf{\textit{\emph{\textsf{#1}}}}}
\newcommand{\SWm}[1]{{\textit{\emph{\textsf{#1}}}}}
\newcommand{\dpl}[1]{\displaystyle{#1}}

\def\ie{\textit{i.e. }}
\def\cf{\textit{c.f. }}


\newcommand{\Ens}{{\mathfrak{Ens}}}
\newcommand{\A}{\mathscr{A}}
\newcommand{\B}{\mathscr{B}}
\newcommand{\CC}{\mathscr{C}}
\newcommand{\DD}{\mathscr{D}}
\newcommand{\E}{\mathfrak{E}}
\renewcommand{\L}{\mathscr{L}}

\newcommand{\e}{\mathfrak{e}}
\renewcommand{\H}{\mathscr{H}}
\newcommand{\G}{\mathscr{G}}
\newcommand{\opp}{\textsf{opp}}
\newcommand{\Hom}{\SWm{Hom}}
\newcommand{\HHom}{\SWm{\underline{Hom}}}
\newcommand{\Id}{\SWm{Id}}

\newcommand{\CCp}[1]{\CC_{{w}\geqslant {#1}}}
\newcommand{\CCn}[1]{\CC_{{w}\leqslant {#1}}}
\newcommand{\CCe}[1]{\CC_{{w}= {#1}}}


\newcommand{\DMB}[1]{{\SWm{DM}_{\text{\cyrb}}(#1)}}
\newcommand{\DMBc}[1]{{\SWm{DM}_{\text{\cyrb},c}(#1)}}
\newcommand{\Lev}{{\ell}}

\newcommand{\DMBcL}[1]{{\SWm{DM}_{\text{\cyrb},c,L}(#1)}}
\newcommand{\DMBp}[1]{{{\SWm{DM}_{\text{\cyrb}}(#1)}_{W\geqslant 0}}}
\newcommand{\DMBn}[1]{{{\SWm{DM}_{\text{\cyrb}}(#1)}_{W\leqslant 0}}}
\newcommand{\DMBe}[1]{{{\SWm{DM}_{\text{\cyrb}}(#1)}_{W= 0}}}
\newcommand{\DMBcp}[1]{{{\SWm{DM}_{\text{\cyrb},c}(#1)}_{w\geqslant 0}}}
\newcommand{\DMBcn}[1]{{{\SWm{DM}_{\text{\cyrb},c}(#1)}_{w\leqslant 0}}}
\newcommand{\DMBce}[1]{{{\SWm{DM}_{\text{\cyrb},c}(#1)}_{w= 0}}}

\newcommand{\Neg}{\SWm{NEG}}
\newcommand{\Pos}{\SWm{POS}}


\newcommand{\N}{\mathbb{N}}
\newcommand{\Z}{\mathbb{Z}}
\newcommand{\Q}{\mathbb{Q}}

\newcommand{\morph}[2]{#1 \rightarrow #2}
\newcommand{\morphp}[3]{#1 \rightarrow #2 \rightarrow #3}
\newcommand{\immouv}[1][r]{\ar@{^(->}[#1]|*[@]{\circ}\ar[#1]}
\newcommand{\immfer}[1][r]{\ar@{^(->}[#1]|*[@]{\shortmid}\ar[#1]}
\newcommand{\immop}[2]{\raisebox{.7ex}{\xymatrix{#1 \immouv[r]& #2}}}
\newcommand{\immcl}[2]{\raisebox{.7ex}{\xymatrix{#1 \immfer[r]& #2}}}
\newcommand{\tridis}[3]{#1 \rightarrow #2 \rightarrow #3 \overset{+1}{\longrightarrow}}
\def\cartesien{\ar@{}[rd]|{\square}}

\newcommand{\Kar}{\mathfrak{R}}
\newcommand{\SE}{{\mathfrak{Ext}}}
\newcommand{\CA}[1]{{\left\langle{#1}\right\rangle}}
\newcommand{\CAep}[1]{{\left\langle{#1}\right\rangle}^{\text{ép}}}
\newcommand{\CAinf}[1]{{\left\langle{#1}\right\rangle}_{\infty}}
\newcommand{\CAepinf}[1]{{\left\langle{#1}\right\rangle}^{\text{ép}}_{\infty}}

\newcommand{\Un}{\mathds{1}}
\newcommand{\Gr}{\SWm{Gr}}
\newcommand{\KK}{\SWm{K}}
\newcommand{\Spec}{\SWm{Spec}}
\newcommand{\AAA}{\mathbb{A}}
\newcommand{\restr}[2]{{#1}_{{|}_{#2}}}

\title{Structures de poids à la Bondarko\\
sur les motifs de Beilinson}
\author{David H\'ebert}
\date{}

\begin{document}

\maketitle

\renewcommand{\abstractname}{\textsc{Abstract}}
\begin{abstract}
\noindent Bondarko defines and studies the notion of weight structure and he shows that there exists a weight structure over the category of Voevodsky motives with rational coefficients (over a field of characteristic $0$). In this paper we extend this weight structure to the category of Beilinson motives (for any scheme of finite type over a base scheme which is excellent of dimension at most two) introduced and studied by Cisinsky-D\'eglise. We also check the weight exactness of the Grothendieck operations.
\end{abstract}

\tableofcontents

\section*{Introduction.}
\addcontentsline{toc}{section}{Introduction.}

Dans \cite{Bo}, Bondarko définit et étudie la notion de \SW{structure de poids}. Il montre qu'il existe une structure de poids sur la catégorie des motifs à la Voevodsky à coefficients rationnels définie sur un corps de caractéristique $0$ (\cf \cite{VSF}) dont le coeur s'identifie à la catégorie des motifs de Chow sur $k$. La question qui se pose alors (\cf \cite[rm. 8.2.5.3]{Bo}) est de savoir comment généraliser cette structure de poids à la catégorie des motifs de Beilinson, introduite et étudiée par Cisinski-Déglise (\cf \cite{CD}).

Dans la première partie, nous redonnons la définition de structure de poids (définition \ref{def.WS}) ainsi que la preuve du théorème de construction de Bondarko (théorème \ref{thm.constr}). La seconde partie est entièrement dédiée au rappel du formalisme des six opérations de Grothendieck dans la catégorie des motifs de Beilinson. 
L'apport nouveau de cet article réside dans la troisième partie dans laquelle nous construisons une structure de poids sur les motifs de Beilinson (théorème \ref{cor.constr.WS.motifs}) répondant ainsi positivement à la question posée par Bondarko. Pour finir, nous établissons les propriétés de stabilité par les six opérations (théorème \ref{propo.compatibilite}).

Cet article précéde \cite{Bo2} dans lequel Bondarko donne une autre preuve du théorème \ref{cor.constr.WS.motifs}.

\noindent \textbf{Notations et conventions.}

Si $\CC$ est une catégorie, la notation $\H\subset \CC$ (où $\CC\supset\H$) signifiera toujours que $\H$ est une sous-catégorie pleine de $\CC$. Pour cette raison nous décrirons les sous-catégories pleines uniquement par la classe de leurs objets. Nous adopterons également les notations ensemblistes ($\in$, $\exists$, $\cup$, $\cap$, etc.) pour les catégories. Par exemple, la notation $X\in \CC$ signifiera toujours que $X$ est un objet de $\CC$. Les triangles distingués seront notés $\tridis{A}{B}{C}$.

On note $\H_c$ la sous-catégorie pleine de $\H$ formée des objets compacts de $\H$ ; on rappelle qu'un objet $H\in \H$ est compact si $\Hom_\H(H,\bullet)$ commute aux petites sommes.

Tous les schémas considérés sont de type fini sur une base $B$ excellente de dimension de Krull au plus $2$. Les morphismes entre schémas sont séparés.

\newpage

\section{Structure de poids.}
\addcontentsline{toc}{section}{\protect\numberline{\thesection} Structure de poids.}
On fixe $\CC$ une catégorie triangulée (on note $[1]$ son foncteur de translation) et $\A$, $\B$ et $\H$ des sous-catégories pleines de $\CC$ possédant $0$ (l'objet initial et final de $\CC$).

\begin{definition} On considère les sous-catégories pleines de $\CC$ suivantes :
\begin{description}
\item[$(i)$. ]\SW{L'enveloppe des rétractes} de $\H$, notée $\Kar(\H)$, est
$$\Kar(\H) :=\left\{X\in \CC\big| \exists \left(\morphp{X}{H}{X}=\Id_X\right),\; H\in \H\right\}.$$
\item[$(ii)$. ] \SW{L'orthogonal à droite} (resp. \SW{à gauche}) de $\H$, notée $\H^\bot$ (resp. ${{}^\bot}\H$), est
\begin{eqnarray*}
&&\H^\bot :=\left\{X\in \CC\big| \forall H\in \H,\;\Hom_\CC(X,H)=0\right\}.\\
\big(resp.&& {{}^\bot}\H :=\left\{X\in \CC\big| \forall H\in \H,\;\Hom_\CC(H,X)=0\right\}.\big)
\end{eqnarray*}
\item[$(iii)$. ] La catégorie des \SW{$1$-extensions} de $\B$ par $\A$, notée $\SE_\CC^1(\B,\A)$, est
$$\SE_\CC^1(\B,\A) :=\left\{X\in \CC\big| \exists \left(\tridis{A}{X}{B}\right),\; A\in \A,\,B\in\B\right\}.$$
On pose $\SE_\CC^1(\H)=\SE^1_\CC(\H,\H)$.
\item[$(iv)$. ] On pose $\dpl{\SE_\CC(\H)=\bigcup_{n\in \N}\SE_\CC^n(\H)}$, appelée \SW{enveloppe des extensions} de $\H$ où
\begin{eqnarray*}
\SE_\CC^0(\H)&=&\H,\\
\forall n\in \N,\quad\SE_\CC^{n+1}(\H)&=&\SE_\CC^1(\SE_\CC^n(\H)).
\end{eqnarray*}
\item[$(v)$. ] On note $\CA{\H}$ la catégorie \SW{engendrée} par $\H$,
$$\CA{\H}:=\SE_\CC\left(\bigcup_{n\in \Z}\H[n]\right).$$
\item[$(vi)$. ] On note $\CAep{\H}$ la catégorie \SW{épaisse engendrée} par $\H$,
$$\CAep{\H}:=\Kar\left(\CA{\H}\right).$$
\item[$(vii)$.] On note $\H^\oplus$, \SW{l'enveloppe additive} de $\H$,
$$\H^\oplus:=\left\{\bigoplus_{i=0}^nH_i\big|n\in \N,\ \forall i\in [\![0,n]\!],\ H_i\in \H\right\}\cup\{0\}.$$
\end{description}
\end{definition}

\begin{remarque}
Les objets de $\Kar(\H)$ sont en fait les facteurs directs d'objets de $\H$.

La catégorie $\CA{\H}$ est la plus petite sous-catégorie triangulée de $\CC$ contenant $\H$.

La catégorie $\CAep{\H}$ est la plus petite sous-catégorie épaisse et triangulée de $\CC$ contenant $\H$.

La catégorie $\H^\oplus$ est la plus petite sous-catégorie additive de $\CC$ contenant $\H$.
\end{remarque}

\begin{definition}$ $
\begin{description}
\item[$(i)$. ] On dira que $\H$ est \SW{stable par rétractes} si $\H=\Kar(\H)$.
\item[$(ii)$. ] On dira que $\H$ est \SW{stable par extensions} si $\H=\SE_\CC^1(\H)$.
\item[$(iii)$.] On dira que $(\A,\B)$ est une \SW{pondération} de $\H$ si $\H\subset\SE^1_\CC(\B,\A)$.
\end{description}
\end{definition}

\begin{remarque}\label{rem...}
Tout orthogonal (à gauche ou a droite) est stable par rétracte. 

La catégorie $\SE_\CC^1(\B,\A)$ est la plus petite catégorie telle que $(\A,\B)$ en soit une pondération.
\end{remarque}

\begin{definition}[{comp. \cite[déf. 1.1.1]{Bo}}]\label{def.WS}
On dira que $w=(\CCn{0},\CCp{0})$, où $\CCn{0}$, $\CCp{0}\subset \CC$, est une \SW{structure de poids} sur $\CC$, notée $w/\CC$, si les axiomes suivants sont satisfaits :
\begin{description}
\item[$(SP1)$. Stabilité par rétractes.] $$\Kar(\CCn{0})=\CCn{0},\quad\Kar(\CCp{0})=\CCp{0}.$$
\item[$(SP2)$. Semi-invariance avec respect des translations.] $$\CCn{0}[-1]\subset\CCn{0},\quad\CCp{0}[1]\subset\CCp{0}.$$
\item[$(SP3)$. Orthogonalité faible.] $$\CCn{0}\subset\CCp{0}[1]^\bot.$$
\item[$(SP4)$. Filtration par le poids.] La donnée $(\CCn{0},\CCp{0}[1])$ est une pondération de $\CC$. On appellera un triangle $\tridis{A}{X}{B}$ où $X\in \CC$, $A\in \CCn{0}$ et $B\in\CCp{0}[1]$, une \SW{filtration par le poids} de $X$.
\end{description}
Pour tout $n\in \Z$, on note
$$\CCn{n}:=\CCn{0}[n],\quad \CCp{n}:=\CCp{0}[n],\quad\CCe{n}:=\CCn{n}\cap\CCp{n}.$$
On appelle $\CCe{0}$ le \SW{c{\oe}ur} de la structure de poids. 
\end{definition}

A noter que la notion de structure de poids fut indépendamment introduite par Pauksztello dans \cite{Pau} alors appélée \SW{co-$t$-structure}.

\begin{proposition}[Orthogonalité forte ; comp. {\cite[prop. 1.3.3.1]{Bo}}] Soit $w/\CC$ une structure de poids.
$$\CCn{0}=\CCp{1}^\bot,\quad\CCp{0}={{}^\bot}\CCn{-1}.$$
\end{proposition}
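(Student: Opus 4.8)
The claim is that $\CCn{0}=\CCp{1}^\bot$ and dually $\CCp{0}={}^\bot\CCn{-1}$. By $(SP3)$ we already have the inclusion $\CCn{0}\subset\CCp{1}^\bot$, so the work is the reverse inclusion; the second identity will follow by the dual argument (replacing $\CC$ by $\CC^{\opp}$, under which a weight structure $w=(\CCn{0},\CCp{0})$ becomes the weight structure $(\CCp{0},\CCn{0})$), so I will only treat the first.

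So fix $X\in\CCp{1}^\bot$, i.e. $\Hom_\CC(X,B)=0$ for every $B\in\CCp{1}=\CCp{0}[1]$. By $(SP4)$ choose a weight filtration $\tridis{A}{X}{B}$ with $A\in\CCn{0}$ and $B\in\CCp{1}$. The goal is to show $X\in\CCn{0}$; by $(SP1)$ it suffices to exhibit $X$ as a retract of $A$. To this end I would look at the connecting map $B\to A[1]$ of the triangle and, rotating, the map $\delta\colon B[-1]\to A$. The key point is that the triangle splits: the morphism $X\to B$ is zero by hypothesis (since $B\in\CCp{1}$ and $X\in\CCp{1}^\bot$), hence in the triangle $\morphp{A}{X}{B}$ the second arrow vanishes, which forces the first arrow $A\to X$ to admit a section $X\to A$ (standard fact: if one map in a distinguished triangle is zero, the triangle splits, so $X\cong A\oplus B[-1]$). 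Thus $X$ is a direct factor of $A\oplus B[-1]$; composing $X\hookrightarrow A\oplus B[-1]$ with the projection onto $A$ need not be a retraction onto $X$ directly, so instead I use the splitting isomorphism $X\cong A\oplus B[-1]$ together with $B[-1]\in\CCp{0}$: then $X\in\SE^1_\CC(\CCp{0},\CCn{0})\cap\CCp{1}^\bot$, and I want to conclude $B[-1]=0$.

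Here is the cleaner route I would actually take, avoiding the splitting subtlety. From $X\to B$ being zero and the triangle $\tridis{A}{X}{B}$, the long exact sequence $\Hom(X,X)\to\Hom(X,B)\to\Hom(X,A[1])\to\Hom(X,X[1])$ is not what I want; rather, apply $\Hom(X,-)$ and use that $\Hom(X,B)=0=\Hom(X,B[-1])$? No — $B[-1]\in\CCp{0}$, and $\CCp{0}$ is not assumed inside $\CCp{1}^\bot$. Instead: since the map $X\to B$ is zero, the identity $\Id_X$ lifts along $A\to X$, i.e. there is $s\colon X\to A$ with (composite $\morphp{X}{A}{X}$) $=\Id_X$. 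Wait — lifting $\Id_X$ requires the obstruction in $\Hom(X,B)$ to vanish, which it does. This $s$ is exactly a retraction datum $\morphp{X}{A}{X}=\Id_X$ with $A\in\CCn{0}$, so $X\in\Kar(\CCn{0})$, and $(SP1)$ gives $X\in\CCn{0}$.

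The main obstacle — and the step to be careful about — is justifying that $\Id_X$ lifts through $A\to X$: in the rotated triangle $\morphp{B[-1]}{A}{X}\xrightarrow{}B$, the map $\Hom_\CC(X,A)\to\Hom_\CC(X,X)$ is part of the exact sequence $\Hom_\CC(X,A)\to\Hom_\CC(X,X)\xrightarrow{(X\to B)_*}\Hom_\CC(X,B)$, and since $(X\to B)_*(\Id_X)$ is the map $X\to B$ itself, which is $0$ by the orthogonality hypothesis, $\Id_X$ is in the image. This gives the section $s$, hence the retraction, hence membership in $\CCn{0}$ by $(SP1)$. The dual statement for $\CCp{0}={}^\bot\CCn{-1}$ is obtained by the same argument in $\CC^{\opp}$, using $(SP1)$, $(SP2)$, $(SP3)$, $(SP4)$ for the opposite weight structure.
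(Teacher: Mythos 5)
Your argument is essentially the paper's own proof: apply the cohomological functor $\Hom_\CC(X,\bullet)$ to a weight filtration $\tridis{A}{X}{B}$, use $\Hom_\CC(X,B)=0$ (as $B\in\CCp{1}$, $X\in\CCp{1}^\bot$) to lift $\Id_X$ through $A\to X$, exhibit $X$ as a retract of $A\in\CCn{0}$, and conclude by $(SP1)$, the second identity following dually. The intermediate detour about splitting the triangle is unnecessary but harmless, since your final route is exactly the lifting argument above.
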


\begin{proof}
Soient $X\in \CCp{1}^\bot$ et $\tridis{A}{X}{B}$ une filtration par le poids de $X$. Via le foncteur cohomologique $\Hom_\CC(X,\bullet)$  on obtient la suite exacte $\morphp{\Hom_\CC(X,A)}{\Hom_\CC(X,X)}{\Hom_\CC(X,B)}$. Or $\Hom_\CC(X,B)=0$ d'où un épimorphisme ${\Hom_\CC(X,A)}\twoheadrightarrow{\Hom_\CC(X,X)}$ qui permet de voir $X$ comme un rétracte de $A\in \CCn{0}$ et $X\in \CCn{0}$ par $(SP1)$.
\end{proof}

\`A présent nous allons établir un théorème de construction de structure de poids.

\begin{lemme}\label{orth.ext.stab}
On a les égalités suivantes
$$\SE_\CC(\H^\bot)=\SE_\CC(\H)^\bot=\H^\bot, \qquad\SE_\CC({^\bot}\H)={^\bot}\SE_\CC(\H)={^\bot}\H.$$
\end{lemme}

\begin{proof}
On a $\H\subset \SE_\CC(\H)$ et l'opération d'orthogonalité inversant les inclusions on aboutit trivialement à $\SE_\CC(\H)^\bot\subset\H^\bot\subset\SE_\CC(\H^\bot)$. Pour montrer que ces inclusions sont des égalités, on va montrer par r\'ecurrence sur $n\in \N$ l'\'enonc\'e suivant :
pour tout entier $m\in \N$,
$\SE^{n}_\CC(\H^{\bot})\subset\SE^{m}_\CC(\H)^{\bot}.$
\begin{description}
\item[Cas initial : $n=0$.] R\'ecurrence sur $m$ ;
le cas $m=0$ \'etant trivial. Supposons que pour un $m$ quelconque
fix\'e on ait $\H^\bot\subset\SE^{m}_\CC(\H)^\bot$. Soit $X\in\H^\bot$, 
on veut voir qu'il s'agit d'un objet de
$\SE^{m+1}_\CC(\H)^\bot$, c'est \`a dire que pour tout  $Y\in\SE^{m+1}_\CC(\H)$ on ait $\Hom_\CC(X,Y)=0$.
Par d\'efinition on a un triangle distingu\'e de $\CC$
de la forme $\tridis{A}{Y}{B}$ tel que $A, B\in \SE^{m}_\CC(\H)$. Le foncteur $\Hom_\CC(X,\bullet)$ \'etant
cohomologique on en d\'eduit la suite exacte
$\morphp{\Hom_\CC(X,A)}{\Hom_\CC(X,Y)}{\Hom_\CC(X,B)}$. Mais les
objets extr\'emaux de cette suite sont nuls car $X\in\H^\bot$ et $A, B\in\SE^{m}_\CC(\H)$
dont, par hypoth\`ese de r\'ecurrence, nous savons que
$\H^\bot\subset\SE^{m}_\CC(\H)^\bot$. Ainsi $\Hom_\CC(X,Y)=0$.
\item[R\'ecurrence.]
On va montrer que quelque soit l'entier $m\in \N$ on a
$\SE^{n+1}_\CC(\H^\bot)\subset\SE^{m}_\CC(\H)^\bot$. Soit $X\in\SE^{n+1}_\CC(\H^\bot)$. On veut voir qu'il est dans
$\SE^{m}_\CC(\H)^\bot$, c'est \`a dire que pour tout $Y\in\SE^{m}_\CC(\H)$, on ait $\Hom_\CC(X,Y)=0$. Il existe par
d\'efinition $\tridis{A}{X}{B}$ tel que $A, B\in \SE^{n}_\CC(\H^\bot)$. Le foncteur $\Hom_\CC(\bullet,Y)$ \'etant
cohomologique, on en d\'eduit une suite exacte
$\morphp{\Hom_\CC(B,Y)}{\Hom_\CC(X,Y)}{\Hom_\CC(A,Y)}$. Comme $A, B\in \SE^{n}_\CC(\H^\bot)$ qui, par hypoth\`ese
de r\'ecurrence, est inclus dans $\SE^{m}_\CC(\H)^\bot$, on en
d\'eduit que les deux objets extr\'emaux de cette suite sont nuls et
donc que $\Hom_\CC(X,Y)=0$.
\end{description}
On raisonne dualement pour l'orthogonal à gauche.
\end{proof}

\newpage

\begin{proposition}\label{lem.constr}
Supposons $\A\subset\B[1]^\bot$. Si $(\A,\B)$ est une pondération de $\H$ alors $(\SE_\CC(\A),\SE_\CC(\B))$ est une pondération de $\SE_\CC(\H)$.
\end{proposition}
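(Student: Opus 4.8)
L'objectif est l'inclusion $\SE_\CC(\H)\subseteq\SE_\CC^1(\SE_\CC(\B),\SE_\CC(\A))$. Posons $\mathcal{A}=\SE_\CC(\A)$, $\mathcal{B}=\SE_\CC(\B)$ et $\mathcal{F}=\SE_\CC^1(\mathcal{B},\mathcal{A})$. Comme $(\A,\B)$ est une pond\'eration de $\H$ et que $\A\subseteq\mathcal{A}$, $\B\subseteq\mathcal{B}$, on a $\H\subseteq\SE_\CC^1(\B,\A)\subseteq\mathcal{F}$. Puisque $\SE_\CC(\H)$ s'obtient \`a partir de $\H$ en it\'erant l'op\'eration $\SE_\CC^1$, il suffira d'\'etablir que $\mathcal{F}$ est stable par extensions, \ie que $\SE_\CC^1(\mathcal{F})\subseteq\mathcal{F}$ : une r\'ecurrence imm\'ediate sur $n$, utilisant que $\SE_\CC^1$ pr\'eserve les inclusions, donnera alors $\SE_\CC^n(\H)\subseteq\mathcal{F}$ pour tout $n$, d'o\`u la conclusion.

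Je commencerais par \'etablir trois faits \'el\'ementaires : l'op\'eration $\SE_\CC(-)$ pr\'eserve les inclusions ; pour toute sous-cat\'egorie pleine contenant $0$, les $\SE_\CC^n$ sont croissantes en $n$ et $\SE_\CC$ est stable par extensions ; enfin $\SE_\CC$ commute \`a la translation, soit $\SE_\CC(\H[k])=\SE_\CC(\H)[k]$ pour $k\in\Z$, puisque translater un triangle distingu\'e fournit un triangle distingu\'e. En combinant les deux derniers faits avec le lemme \ref{orth.ext.stab} appliqu\'e \`a $\H=\B[1]$, l'hypoth\`ese d'orthogonalit\'e passe \`a $\mathcal{A}$ et $\mathcal{B}$ : $\mathcal{A}=\SE_\CC(\A)\subseteq\SE_\CC(\B[1]^\bot)=\SE_\CC(\B[1])^\bot=\mathcal{B}[1]^\bot$, c'est-\`a-dire $\Hom_\CC(A,B[1])=0$ pour tous $A\in\mathcal{A}$ et $B\in\mathcal{B}$.

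Le c{\oe}ur de la preuve, et l'obstacle principal, est la stabilit\'e de $\mathcal{F}$ par extensions, que j'obtiendrais en recollant deux filtrations par le poids \`a l'aide de trois applications de l'axiome de l'octa\`edre. Partons de $\tridis{Y}{X}{Z}$ avec $Y,Z\in\mathcal{F}$ et fixons des filtrations $\tridis{A_Y}{Y}{B_Y}$, $\tridis{A_Z}{Z}{B_Z}$ avec $A_Y,A_Z\in\mathcal{A}$ et $B_Y,B_Z\in\mathcal{B}$. Un premier octa\`edre appliqu\'e au compos\'e $A_Y\to Y\to X$ fournit des triangles $\tridis{A_Y}{X}{C}$ et $\tridis{B_Y}{C}{Z}$. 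L'orthogonalit\'e force le compos\'e $A_Z\to Z\to B_Y[1]$, le second morphisme \'etant celui de connexion du triangle $\tridis{B_Y}{C}{Z}$, \`a \^etre nul ; donc $A_Z\to Z$ se rel\`eve le long de $C\to Z$. Un second octa\`edre, appliqu\'e \`a ce rel\`evement $A_Z\to C$ suivi de $C\to Z$ --- dont le compos\'e $A_Z\to C\to Z$ est le morphisme $A_Z\to Z$ de d\'epart, de c\^one $B_Z$ ---, fournit $\tridis{A_Z}{C}{B'}$ ainsi qu'un triangle reliant $B'$, $B_Z$ et $B_Y[1]$ qui, apr\`es rotation, montre $B'\in\SE_\CC^1(\mathcal{B})=\mathcal{B}$. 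Un troisi\`eme octa\`edre appliqu\'e au compos\'e $X\to C\to B'$ produit d'une part un triangle de termes extr\^emes $A_Y[1]$ et $A_Z[1]$, for\c{c}ant son terme central $B_X$ dans $\SE_\CC^1(\mathcal{A}[1])=\mathcal{A}[1]$, d'autre part un triangle $\tridis{X}{B'}{B_X}$ qui, apr\`es rotation, s'\'ecrit $\tridis{B_X[-1]}{X}{B'}$ avec $B_X[-1]\in\mathcal{A}$ et $B'\in\mathcal{B}$ : c'est une filtration par le poids de $X$ relativement \`a $(\mathcal{A},\mathcal{B})$, donc $X\in\mathcal{F}$.

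Les points \`a v\'erifier avec soin seront : l'identification exacte des c\^ones pertinents dans chacun des trois octa\`edres, pour que $\mathcal{A}[1]$ et $\mathcal{B}$ apparaissent bien l\`a o\`u on l'annonce ; le fait que, dans le deuxi\`eme octa\`edre, le compos\'e $A_Z\to C\to Z$ soit bien le morphisme $A_Z\to Z$ de d\'epart, de sorte que son c\^one soit $B_Z$ ; et la gestion des rotations, afin que le triangle final se pr\'esente sous la forme $A_X\to X\to B'$ et non sous celle d'un translat\'e. Tout le reste --- les r\'ecurrences, la croissance en $n$ (o\`u intervient l'hypoth\`ese $0\in\A,\B,\H$), et le fait qu'on n'utilise jamais $\Hom(\mathcal{A},\mathcal{A}[1])=0$ mais seulement $\Hom(\mathcal{A},\mathcal{B}[1])=0$ --- ne pr\'esente aucune difficult\'e.
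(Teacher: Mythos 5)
Your proof is correct and follows essentially the paper's own approach: you upgrade the orthogonality to $\SE_\CC(\A)\subset\SE_\CC(\B)[1]^\bot$ via Lemma \ref{orth.ext.stab} exactly as the paper does, and your claim that $\SE^1_\CC(\SE_\CC(\B),\SE_\CC(\A))$ is stable under extensions is precisely the paper's induction step on $\SE^n_\CC(\H)$ repackaged. The only difference is presentational: where the paper glues the two weight filtrations into a $3\times 3$ diagram by citing propositions 1.1.9 and 1.1.11 of Beilinson--Bernstein--Deligne, you carry out the same gluing by hand with three octahedra (using the orthogonality once, to lift $A_Z\rightarrow Z$ through $C$), a self-contained substitute for the same step.
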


\begin{proof}
Comme $\dpl{\SE_\CC(\H)=\bigcup_{n\in \N}\SE_\CC^n(\H)}$, on va raisonner par récurrence sur $n\in \N$ ; le cas initial $n=0$ suit de l'hypothèse de l'énoncé. 
Soit $X\in \SE_\CC^{n+1}(\H)$ ; par construction il existe un triangle distingué $\tridis{X'}{X}{X''}$ avec $X'$ et $X''$ des objets de $\SE_\CC^n(\H)$ dont, par hypothèse de récurrence, $(\SE_\CC(\A),\SE_\CC(\B))$ est une pondération. C'est à dire qu'il existe  $A',A''\in\SE_\CC(\A)$ et $B',B''\in\SE_\CC(\B)$\\
\vspace{-0.7cm}
\setlength{\columnseprule}{1pt}
\begin{multicols}{2}
\noindent telle que l'on ait le diagramme suivant
$$\xymatrix@R=0.5cm{
A'\ar[d]&&A''\ar[d]&A'[1]\ar[d]\\
X'\ar[d]\ar[r]&X\ar[r]&X''\ar[d]\ar[r]&X'[1]\ar[d]\\
B'\ar[d]^{+1}&&B''\ar[d]^{+1}&B'[1]\ar[d]^{+1}\\
&&&
}$$
Comme $\SE_\CC(\A)\subset\SE_\CC(\B[1]^\bot)\overset{\ref{orth.ext.stab}}{=}\SE_\CC(\B)[1]^\bot$ on peut appliquer \cite[prop. 1.1.9]{BBD} (sur la partie droite du diagramme) et \cite[prop. 1.1.11]{BBD}, pour compléter le précédent diagramme en
$$\xymatrix@R=0.5cm{
A'\ar[d]\ar[r]&A\ar[r]\ar[d]&A''\ar[d]\ar[r]^{+1}&\\
X'\ar[d]\ar[r]&X\ar[r]\ar[d]&X''\ar[d]\ar[r]^{+1}&\\
B'\ar[r]\ar[d]_{+1}&B\ar[r]\ar[d]_{+1}&B''\ar[r]_{+1}\ar[d]_{+1}&\\
&&&
}$$
où toutes les lignes et toutes les colonnes sont des triangles distingués. La stabilité par extension permet de conclure que $A\in \SE_\CC(\A)$ et $B\in \SE_\CC(\B)$.
\end{multicols}
\end{proof}

\begin{theoreme}[Théorème de construction de Bondarko ; comp. {\cite[thm. 4.3.2.II.1, prop. 5.2.2]{Bo}}]\label{thm.constr}
Supposons que l'une des conditions suivantes soit satisfaite
\begin{center}
$(a)$. $\CC=\CA{\H}$,\hspace{2cm}$(b)$. $\CC$ est pseudo-abélienne et $\CC=\CAep{\H}$.
\end{center}
Alors les conditions suivantes sont équivalentes :

\begin{description}
\item[$(i)$.] Il existe une unique structure de poids $w/\CC$ telle que $\H\subset\CCe{0}$,
\item[$(ii)$.] $\dpl{\H\subset\left(\bigcup_{n>0}\H[n]\right)^\bot}$.
\end{description}
De plus, dans le cas $(b)$, $\CCe{0}=\Kar(\H^\oplus)$.
\end{theoreme}

\begin{proof} 
L'orhtogonalité faible prouve que la condition $(ii)$ soit nécessaire. 

Supposons la condition $(a)$ satisfaite.
Sous $(ii)$ on construit la structure de poids suivante :
$$\CCn{0}=\Kar\left(\SE_\CC\left(\bigcup_{n\leqslant 0}\H[n]\right)\right),\qquad \CCp{0}=\Kar\left(\SE_\CC\left(\bigcup_{n\geqslant 0}\H[n]\right)\right).$$
Les axiomes $(SP1)$ et $(SP2)$ viennent de la construction, $(SP3)$ vient de l'hypothèse $(ii)$, quand à $(SP4)$ 
on considère la pondération triviale sur $\dpl{\overline{\H}:=\bigcup_{n\in \Z}\H[n]}$ : on prend un objet $X$ dans cette catégorie, c'est à dire qu'il est dans l'un des $\H[n]$ ; si $n\leqslant 0$ on considère le triangle $\tridis{X}{X}{0}$, sinon (\ie $n>0$) on considère le triangle $\tridis{0}{X}{X}$. Ainsi, en posant $\dpl{\A = \bigcup_{n\leqslant 0}\H[n]}$ et $\dpl{\B = \bigcup_{n>0}\H[n]}$, $(\A,\B)$ est une pondération de $\overline{\H}$. Grace à $(ii)$, on peut appliquer  \ref{lem.constr} : $(\SE_\CC(\A),\SE_\CC(\B))$ et \textit{a fortiori} $(\Kar(\SE_\CC(\A)),\Kar(\SE_\CC(\B)))$ est une pondération de $\SE_\CC(\overline{\H})=\CA{\H}=\CC$. L'unicité de cette structure suit de l'orthogonalité forte. 

Supposons à présent la condition $(b)$ satisfaite. Quitte à remplacer $\H$ par $\H^\oplus$, on peut supposer que $\H$ est additive.
Notons $\e(\H)$ la petite enveloppe de $\H$ (\cite[déf. 4.3.1.3]{Bo}) et $\E(\H)$ son enveloppe pseudo-abélienne (voir par exemple \cite[déf. 1.2]{Bal.Sch} ; à noter que l'on ne peut prendre ni la petite enveloppe ni l'enveloppe pseudo-abélienne de $\H$ si elle n'est pas additive ; à noter de plus qu'il existe une équivalence de catégorie entre $\Kar(\H)$ et $\E(\H)$) de sorte que l'on ait les inclusions suivantes $\H\subset\e(\H)\subset\E(\H)$ qui sont des égalités lorsque $\H$ est pseudo-abélienne.

Le raisonnement précédent amène une structure de poids $d$ sur $\DD=\CA{\H}$. Puisque c'est le cas de $\DD_{d=0}$ (orthogonalité faible), $\E(\DD_{d=0})$ vérifie la condition $(ii)$, ainsi en appliquant encore le raisonnement précédent il existe une unique structure de poids $d'$ sur $\DD'=\CA{\E(\DD_{d=0})}\subset \E(\DD)$ telle que $\E(\DD_{d=0})\subset\DD'_{d'=0}$. D'aprés \cite[thm. 4.3.2.II.2]{Bo} on a $\E(\DD_{d=0})=\e(\E(\DD_{d=0}))=\DD'_{d'=0}$. Le coeur de $d'$ est pseudo-abélien il en va donc de même pour $\DD'$ (\cf \cite[lm. 5.2.1]{Bo}) et nécessairement $\DD'=\E(\DD)=\CC$.

Nous avons ainsi trouvé une structure de poids sur $w/\CC$ qui est $d'$. En particulier $\CCe{0}=\DD'_{d'=0}=\E(\DD_{d=0})=\E(\e(\H))=\Kar(\H)$.
\end{proof}

\begin{remarque}\label{rm.thm.constr}
Dans le cas de la condition $(b)$ on peut donner explicitement la structure de poids comme dans la condition $(a)$. En reprenant les notations de la preuve précédente, on arrive à 
$$\CCn{0}=\DD'_{d'\leqslant0}=\Kar\left(\SE_{\E(\DD)}\left(\bigcup_{n\leqslant 0}\E(\DD_{d=n})\right)\right).$$
Sachant qu'il existe une équivalence de catégorie entre l'enveloppe des rétractes et l'enveloppe pseudo-abélienne, que $\E(\DD)=\CC$ et que $\DD_{d=0}=\e(\H^\oplus)$ on en déduit
$$\CCn{0}=\Kar\left(\SE_\CC\left(\bigcup_{n\leqslant0}\Kar(\H^\oplus)[n]\right)\right)=\Kar\left(\SE_\CC\left(\Kar\left(\bigcup_{n\leqslant0}\H^\oplus[n]\right)\right)\right).$$
De même en changeant le symbole $\leqslant$ par $\geqslant$.
\end{remarque}

On peux ``ajouter'' aux précédents résultats des petites sommes. On suppose que les objets de $\CC$ sont stables par petites sommes.

\begin{definition}
On considère les sous-catégories pleines de $\CC$ suivantes :
\setlength{\columnseprule}{1pt}
\begin{multicols}{2}
\begin{description}
\item[$(i)$.] 
$\dpl{\H^\infty :=\left\{\bigoplus_{i\in I}H_i\big| I\in \Ens,\;\forall i\in I, H_i\in \H\right\}.}$
\item[$(ii)$.]
$\dpl{\SE^\infty_\CC(\H):=\SE_\CC\left(\SE_\CC(\H)^\infty\right).}$
\item[$(iii)$.] 
$\dpl{\CAinf{\H}:=\SE_\CC^\infty\left(\bigcup_{n\in \Z}\H[n]\right).}$
\item[$(iv)$.] 
$\dpl{\CAepinf{\H}:=\Kar\left(\CAinf{\H}\right).}$
\end{description}
\end{multicols}
\end{definition}

\begin{lemme}\label{lm.clef.orth.inf}
On a les égalités suivantes 
$${^\bot}\H={^\bot}(\H^\infty)\subset({^\bot}\H)^\infty.$$
Si de plus les objets de $\H$ sont compacts (\ie $\H=\H_c$) alors l'inclusion est une égalité.
\end{lemme}

\begin{proof}
Naturellement ${^\bot}\H\subset({^\bot}\H)^\infty$. De même $\H\subset\H^\infty$ ce qui donne
${^\bot}(\H^\infty)\subset{^\bot}\H$. Vérifions l'inclusion inverse : soient $X\in {^\bot}\H$ et $H\in \H^\infty$ c'est à dire qu'il existe un ensemble d'indice $I$ et $H_i\in\H$ indexé par $I$ tel que $\dpl{H=\bigoplus_{i\in I}H_i}$ ; mais $\dpl{\Hom_\CC(H,X)=\prod_{i\in I}\Hom_\CC(H_i,X)=0}$. 

Supposons à présent que les objets de $\H$ sont compacts et montrons que $({^\bot}\H)^\infty\subset{^\bot}(\H^\infty)$ : soient $X\in ({^\bot}\H)^\infty$ et $H\in \H^\infty$ ; cela signifie qu'il existe des ensembles d'indices $I$ et $J$ tel que $H=\dpl{\bigoplus_{i\in I}H_i}$ et $X=\dpl{\bigoplus_{j\in J}X_j}$ où chaque $H_i\in \H$ et $X_j\in {^\bot}\H$, ainsi
\vspace*{-0.7cm}
\begin{eqnarray*}
\Hom_\CC(H,X)&=&\Hom_\CC\left(\bigoplus_{i\in I}H_i,\bigoplus_{j\in J}X_j\right)\\
&\overset{\text{compact}}{=}&\prod_{i\in I}\bigoplus_{j\in J}\Hom_\CC\left(H_i,X_j\right)\\
&=&0.
\end{eqnarray*}
\end{proof}

\begin{lemme}\label{orth.ext.stab.inf}
On a les égalités suivantes
$${^\bot}\H={^\bot}\SE_\CC^\infty(\H)={^\bot}(\SE_\CC(\H)^\infty)\subset(\SE_\CC({^\bot}\H))^\infty.$$
Si de plus $\H=\H_c$ alors l'inclusion est une égalité.
\end{lemme}

\begin{proof}
C'est le lemme précédent et \ref{orth.ext.stab}.
\end{proof}

\begin{lemme}\label{lm.technique}
Supposons que $\A=\A_c$. Alors on a les équivalences suivantes.
$$\xymatrix@R=0.5cm{(\A\subset\B^\bot)\ar@{<=>}[d]\ar@{<=>}[r]&(\A^\infty\subset (\B^\infty)^\bot)\ar@{<=>}[d]\\
(\B\subset{^\bot}\A)\ar@{<=>}[r]&(\B^\infty\subset {^\bot}(\A^\infty))}$$
\end{lemme}

\begin{proof}
Les équivalences verticales sont triviales. Il suffit de vérifier $(\B\subset{^\bot}\A)\Longleftrightarrow(\B^\infty\subset {^\bot}(\A^\infty))$. L'orthogonalité inversant le sens des inclusions on a $\B\subset\B^\infty\subset{^\bot}(\A^\infty)\subset{^\bot}\A$ (ce qui prouve $\Leftarrow$). Pour la réciproque on remarque que ${^\bot}\A={^\bot}(\A^\infty)$ est stable par somme quelconque (\cf \ref{lm.clef.orth.inf}) ; donc si $\B\subset{^\bot}\A$ alors $\B^\infty\subset{^\bot}(\A^\infty)$.
\end{proof}

\begin{proposition}\label{lem.constrinf}
Supposons $\A\subset \B[1]^\bot$ et $\A=\A_c$. Si $(\A,\B)$ est une pondération de $\H$ alors $(\SE_\CC^\infty(\A),\SE_\CC^\infty(\B))$ est une pondération de $\SE_\CC^\infty(\H)$.
\end{proposition}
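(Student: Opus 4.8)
Le plan est de d\'eduire l'\'enonc\'e de la proposition \ref{lem.constr} appliqu\'ee \`a deux reprises, en intercalant l'\'etape d'ajout des petites sommes --- exactement comme la preuve de \ref{lem.constr} invoque \ref{orth.ext.stab} l\`a o\`u l'on fera ici appel \`a \ref{lm.technique}. Pour commencer, les hypoth\`eses $\A\subset\B[1]^\bot$ et ``$(\A,\B)$ est une pond\'eration de $\H$'' sont pr\'ecis\'ement celles de la proposition \ref{lem.constr}, qui fournit donc que $(\SE_\CC(\A),\SE_\CC(\B))$ est une pond\'eration de $\SE_\CC(\H)$ ; on retient de plus, via le lemme \ref{orth.ext.stab} et la commutation des extensions aux translations, l'inclusion $\SE_\CC(\A)\subset\SE_\CC(\B[1]^\bot)=(\SE_\CC(\B)[1])^\bot$, dont on se servira plus bas.

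On passe ensuite aux petites sommes, en deux temps. D'une part, une petite somme de triangles distingu\'es \'etant distingu\'ee, sommer des filtrations par le poids $\tridis{A_i}{Y_i}{B_i}$ d'une famille $(Y_i)_{i\in I}$ d'objets de $\SE_\CC(\H)$ produit un triangle distingu\'e $\tridis{\bigoplus_{i\in I} A_i}{\bigoplus_{i\in I} Y_i}{\bigoplus_{i\in I} B_i}$ avec $\bigoplus_{i\in I} A_i\in\SE_\CC(\A)^\infty$ et $\bigoplus_{i\in I} B_i\in\SE_\CC(\B)^\infty$, si bien que $(\SE_\CC(\A)^\infty,\SE_\CC(\B)^\infty)$ est une pond\'eration de $\SE_\CC(\H)^\infty$. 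D'autre part, on propage l'orthogonalit\'e : comme $\A=\A_c$ et que les objets compacts de $\CC$ forment une sous-cat\'egorie stable par extensions, $\SE_\CC(\A)$ n'est form\'ee que d'objets compacts, c'est-\`a-dire $\SE_\CC(\A)=\SE_\CC(\A)_c$ ; le lemme \ref{lm.technique}, appliqu\'e avec $\SE_\CC(\A)$ dans le r\^ole de la cat\'egorie compacte et $\SE_\CC(\B)[1]$ dans l'autre, transforme alors l'inclusion $\SE_\CC(\A)\subset(\SE_\CC(\B)[1])^\bot$ obtenue ci-dessus en $\SE_\CC(\A)^\infty\subset\big((\SE_\CC(\B)[1])^\infty\big)^\bot=(\SE_\CC(\B)^\infty[1])^\bot$, la translation commutant aux petites sommes.

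Il ne reste alors qu'\`a appliquer une seconde fois la proposition \ref{lem.constr}, cette fois au couple $(\SE_\CC(\A)^\infty,\SE_\CC(\B)^\infty)$ et \`a la cat\'egorie $\SE_\CC(\H)^\infty$ : l'hypoth\`ese d'orthogonalit\'e $\SE_\CC(\A)^\infty\subset(\SE_\CC(\B)^\infty[1])^\bot$ est le second point ci-dessus, et le fait que ce couple pond\`ere $\SE_\CC(\H)^\infty$ en est le premier. On obtient que $\big(\SE_\CC(\SE_\CC(\A)^\infty),\SE_\CC(\SE_\CC(\B)^\infty)\big)$ est une pond\'eration de $\SE_\CC(\SE_\CC(\H)^\infty)$, c'est-\`a-dire, par d\'efinition de l'op\'eration $\SE_\CC^\infty$, l'\'enonc\'e voulu. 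L'obstacle principal --- le seul point qui ne soit pas purement formel --- est la propagation de la compacit\'e $\A=\A_c\Rightarrow\SE_\CC(\A)=\SE_\CC(\A)_c$, qui l\'egitime l'appel au lemme \ref{lm.technique} ; tout le reste se ram\`ene \`a un suivi soigneux des commutations entre orthogonal, enveloppe d'extensions et petites sommes, et \`a la stabilit\'e des triangles distingu\'es par petites sommes.
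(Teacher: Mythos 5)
Votre preuve est correcte et suit essentiellement la m\^eme d\'emarche que celle du texte : on somme les filtrations par le poids fournies par \ref{lem.constr} pour obtenir que $(\SE_\CC(\A)^\infty,\SE_\CC(\B)^\infty)$ pond\`ere $\SE_\CC(\H)^\infty$, on propage la compacit\'e aux extensions pour l\'egitimer le lemme \ref{lm.technique} et obtenir $\SE_\CC(\A)^\infty\subset(\SE_\CC(\B)^\infty[1])^\bot$, puis on r\'eapplique \ref{lem.constr}. Les pr\'ecisions suppl\'ementaires (passage explicite par \ref{orth.ext.stab}, commutation de la translation aux petites sommes) ne changent rien \`a la structure de l'argument.
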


\begin{proof}
Soit $X\in \SE_\CC(\H)^\infty$, c'est à dire $\dpl{X=\bigoplus_{i\in I}X_i}$ pour un certain ensemble d'indice $I$ ou chaque $X_i\in \SE_\CC(\H)$. D'après $\ref{lem.constr}$, il existe $A_i\in \SE_\CC(A)$, $B_i\in \SE_\CC(\B)$ et un triangle distingué $\tridis{A_i}{X_i}{B_i}$. En sommant ces triangles on obtient le triangle $\tridis{A}{X}{B}$ où $A\in \SE_\CC(\A)^\infty$ et $B\in \SE_\CC(\B)^\infty$. Nous avons ainsi prouvé que $(\SE_\CC(\A)^\infty,\SE_\CC(\B)^\infty)$ est une pondération de $\SE_\CC(\H)^\infty$. D'après le lemme \ref{lm.technique}, comme les objets de $\SE_\CC(\A)$ sont compacts (car extensions de compacts), on a $\SE_\CC(\A)^\infty\subset (\SE_\CC(\B)^\infty[1])^\bot$. On conclut en appliquant encore \ref{lem.constr}.
\end{proof}

\begin{theoreme}\label{thm.constr2}
Supposons que $\H=\H_c$ et que l'une des conditions suivantes soit satisfaite
\begin{center}
$(a_\infty)$. $\CC=\CAinf{\H}$,\hspace{2cm}$(b_\infty)$. $\CC=\CAepinf{\H}$.
\end{center}
Alors les conditions suivantes sont équivalentes :
\begin{description}
\item[$(i)$.] Il existe une unique structure de poids $w/\CC$ telle que $\H^\infty\subset\CCe{0}$,
\item[$(ii)$.] $\dpl{\H\subset\left(\bigcup_{n>0}\H[n]\right)^\bot}$.
\end{description}
\end{theoreme}

\begin{proof}
On raisonne comme pour \ref{thm.constr} en ``ajoutant'' des sommes infinies. Dans le cas $(a_\infty)$ on construit la structure de poids suivante :
$$\CCn{0}=\Kar\left(\SE^\infty_\CC\left(\bigcup_{n\leqslant 0}\H[n]\right)\right),\qquad \CCp{0}=\Kar\left(\SE_\CC^\infty\left(\bigcup_{n\geqslant 0}\H[n]\right)\right).$$ On raisonne comme dans le cas $(a)$ de \ref{thm.constr} : soient $\A$, $\B$ et $\overline{\H}$ comme dans la preuve du cas $(a)$. Alors $\A\subset\B[1]^\bot$ par $(ii)$ donc $\SE_\CC(\A)\subset\SE_\CC(\B)[1]^\bot$ ; par \ref{lm.technique} on en déduit $\SE_\CC(\A)^\infty\subset(\SE_\CC(\B)^\infty[1])^\bot$ (les objets de $\SE_\CC(\A)$ sont compacts car c'est le cas des objets de $\A$) ce qui prouve l'axiome $(SP3)$ (via \ref{orth.ext.stab}). Nous avons vu que $(\A,\B)$ est une pondération de $\overline{\H}$ ; on prouve $(SP4)$ en appliquant \ref{lem.constrinf}.

Le cas $(b_\infty)$ se traite comme le cas $(b)$ de \ref{thm.constr} sachant que $\CC$ est pseudo-abélienne (voir par exemple  \cite[prop. 1.6.8]{Nee}).
\end{proof}

\begin{remarque}\label{rm.thm.constr2}
Dans le cas $(b_\infty)$ on peut décrire la structure de poids :
$$\CCn{0}=\Kar\left(\SE_\CC^\infty\left(\Kar\left(\bigcup_{n\leqslant0}\H^\oplus[n]\right)\right)\right),\qquad\CCp{0}=\Kar\left(\SE^\infty_\CC\left(\Kar\left(\bigcup_{n\geqslant0}\H^\oplus[n]\right)\right)\right).$$
\end{remarque}

\begin{definition}[{comp. \cite[déf. 1.2.1.VI]{Bo2}}]
Soient $\CC$ et $\CC'$ des catégories triangulées, $c/\CC$, $c'/\CC'$ des structures de poids et $F:\morph{\CC}{\CC'}$ un foncteur de catégories triangulées.
\begin{description}
\item[$\bullet$] On dira que $F$ est \SW{$w$-exacte à gauche} si $F$ transforme les objets de $\CC_{c \leqslant0}$ en objets de $\CC'_{c' \leqslant0}$.
\item[$\bullet$] On dira que $F$ est \SW{$w$-exacte à droite} si $F$ transforme les objets de $\CC_{c \geqslant0}$ en objets de $\CC'_{c' \geqslant0}$.
\item[$\bullet$] On dira que $F$ est \SW{$w$-exacte} s'il est $w$-exacte à gauche et à droite.
\item[$\bullet$] Supposons $\CC'\subset\CC$ ; on dira que $c'$ est une \SW{restriction} de $c$, notée $c'=\restr{c}{\CC'}$ si le foncteur d'inclusion canonique de $\CC'$ dans $\CC$ est $w$-exacte.
\end{description}
\end{definition}




Les théorèmes \ref{thm.constr} et \ref{thm.constr2} ainsi que la description des structures de poids qu'ils décrivent permettent d'établir les corollaires suivants.

\begin{corollaire}\label{CORRRRRRR}
Supposons les conditions du théorème \ref{thm.constr2} satisfaites ainsi que la condition $(ii)$ de $loc. cit.$. Notons respectivement
\begin{center}
$(a_\infty)$. $\CC'=\CA{\H}$,\hspace{2cm}$(b_\infty)$. $\CC'=\CAep{\H}$.
\end{center}
Alors il existe des structures de poids $w/\CC$ et $w'/\CC'$ telle que $w'=\restr{w}{\CC'}$. 
\end{corollaire}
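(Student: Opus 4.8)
The plan is to build the two weight structures directly from the construction theorems already proved and then to match their explicit descriptions term by term.

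First I would note that $\CC'\subset\CC$ is a full triangulated subcategory: it is $\CA{\H}$ (resp.\ the thick subcategory $\CAep{\H}$), which is contained in $\CAinf{\H}$ (resp.\ $\CAepinf{\H}$). Applying Theorem~\ref{thm.constr2}, whose hypotheses and condition $(ii)$ are assumed, yields a unique weight structure $w/\CC$ with $\H^\infty\subset\CCe{0}$, whose negative part is $\CCn{0}=\Kar(\SE^\infty_\CC(\bigcup_{n\leqslant0}\H[n]))$ in case $(a_\infty)$ and is given by Remark~\ref{rm.thm.constr2} in case $(b_\infty)$. Since condition $(ii)$ is an orthogonality statement inside a full subcategory, it holds verbatim for $\CC'$; hence Theorem~\ref{thm.constr} applies to $\CC'$ — case $(a)$ when $\CC'=\CA{\H}$, case $(b)$ when $\CC'=\CAep{\H}$ (which is pseudo-abelian) — producing a unique weight structure $w'/\CC'$ with $\H\subset\CC'_{w'=0}$, whose negative part is $\CC'_{w'\leqslant0}=\Kar_{\CC'}(\SE_{\CC'}(\bigcup_{n\leqslant0}\H[n]))$, resp.\ the formula of Remark~\ref{rm.thm.constr}, all closures now taken inside $\CC'$. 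By the uniqueness parts of both theorems it then suffices to check that this $w'$ is the restriction of $w$.

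For that I would verify $\CC'_{w'\leqslant0}\subset\CC_{w\leqslant0}$ and $\CC'_{w'\geqslant0}\subset\CC_{w\geqslant0}$; the two are symmetric (swap $\leqslant$ and $\geqslant$), so it is enough to treat the first. The key elementary facts are: for a full triangulated subcategory $\CC'\subset\CC$, distinguished triangles and retracts of $\CC'$ are distinguished triangles and retracts of $\CC$, whence $\SE_{\CC'}(\mathcal S)\subset\SE_\CC(\mathcal S)$ and $\Kar_{\CC'}(\mathcal S)\subset\Kar_\CC(\mathcal S)$ for every class $\mathcal S\subset\CC'$; and $\SE_\CC(\mathcal S)\subset\SE^\infty_\CC(\mathcal S)$ since $\SE_\CC$ is idempotent and $\mathcal S\subset\mathcal S^\infty$. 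Feeding these monotone inclusions into the explicit descriptions — in case $(a)$ the chain $\Kar_{\CC'}(\SE_{\CC'}(\bigcup_{n\leqslant0}\H[n]))\subset\Kar_\CC(\SE^\infty_\CC(\bigcup_{n\leqslant0}\H[n]))$, and in case $(b)$ the analogous chain also carrying along the inner closure $\Kar(\bigcup_{n\leqslant0}\H^\oplus[n])$ — gives exactly $\CC'_{w'\leqslant0}\subset\CC_{w\leqslant0}$. Thus the inclusion functor $\CC'\hookrightarrow\CC$ is $w$-exact, i.e.\ $w'=\restr{w}{\CC'}$.

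I do not expect a genuine obstacle here: the statement is formal once the explicit formulas of Theorems~\ref{thm.constr}, \ref{thm.constr2} and Remarks~\ref{rm.thm.constr}, \ref{rm.thm.constr2} are available. The only point demanding care is the bookkeeping of the ambient category in which each of the operations $\SE_\CC$, $\SE^\infty_\CC$ and $\Kar$ is formed, together with the easy but essential remark that $\CC'$ sits inside $\CC$ as a full triangulated subcategory, so that all these closure operations are compatible with the inclusion functor.
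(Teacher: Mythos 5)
Your argument is correct and is exactly the route the paper intends: the paper gives no separate proof, stating only that Theorems \ref{thm.constr} and \ref{thm.constr2} together with the explicit descriptions of the weight structures (Remarks \ref{rm.thm.constr} and \ref{rm.thm.constr2}) establish the corollary, and your verification via the monotonicity of $\Kar$ and $\SE_\CC$, the inclusion $\SE_\CC\subset\SE^\infty_\CC$, and the compatibility of these closure operations with the full triangulated inclusion $\CC'\subset\CC$ is precisely that argument spelled out.
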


\begin{corollaire}\label{Pour.Levine}
Supposons que l'une des conditions $(a)$, $(b)$, $(a_\infty)$ ou $(b_\infty)$ des théorèmes \ref{thm.constr} et \ref{thm.constr2} soit satisfaite (avec les conditions qui s'imposent sur $\CC$ et $\H$). Supposons également que $\H$ satisfasse la condition $(ii)$ de \textit{loc. cit.}. Soit $\H'\subset\H$.
Notons respectivement
\begin{center}
$(a)$. $\CC'=\CA{\H'}$,\hspace{2.27cm}$(a_\infty)$. $\CC'=\CAinf{\H'}$,\\
$(b)$. $\CC'=\CAep{\H'}$,\hspace{2cm}$(b_\infty)$. $\CC'=\CAepinf{\H'}$.
\end{center}
Alors il existe des structures de poids $w/\CC$ et $w'/\CC'$ telles que $w'=\restr{w}{\CC'}$.
\end{corollaire}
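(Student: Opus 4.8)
The plan is to deduce the statement from the explicit descriptions of the weight structures furnished by Theorems \ref{thm.constr} and \ref{thm.constr2} (and Remarks \ref{rm.thm.constr}, \ref{rm.thm.constr2}), together with the compatibility of the orthogonality lemmas \ref{orth.ext.stab}, \ref{lm.clef.orth.inf} and \ref{orth.ext.stab.inf} with the passage from $\H'$ to $\H$. First I would observe that $\H'\subset\H$ inherits condition $(ii)$ of \textit{loc. cit.}: since $\bigcup_{n>0}\H'[n]\subset\bigcup_{n>0}\H[n]$, taking right orthogonals gives $\left(\bigcup_{n>0}\H[n]\right)^\bot\subset\left(\bigcup_{n>0}\H'[n]\right)^\bot$, and $\H'\subset\H$ lies in the former, hence in the latter. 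Likewise, in the compact (infinite) case, $\H'_c=\H'$ since $\H'\subset\H=\H_c$ and compactness is an intrinsic property. Thus the relevant theorem among \ref{thm.constr}, \ref{thm.constr2} applies both to $(\CC,\H)$ and to $(\CC',\H')$, producing weight structures $w/\CC$ and $w'/\CC'$.

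Next I would check the $w$-exactness of the inclusion $\CC'\hookrightarrow\CC$, i.e.\ that $\CC'_{w'\leqslant 0}\subset\CCn{0}$ and $\CC'_{w'\geqslant 0}\subset\CCp{0}$. Treat the four cases uniformly. In case $(a)$, by the construction in the proof of \ref{thm.constr} we have $\CC'_{w'\leqslant 0}=\Kar_{\CC'}\left(\SE_{\CC'}\left(\bigcup_{n\leqslant 0}\H'[n]\right)\right)$ and $\CCn{0}=\Kar_{\CC}\left(\SE_{\CC}\left(\bigcup_{n\leqslant 0}\H[n]\right)\right)$. Since $\CC'$ is a full triangulated subcategory of $\CC$, the operations $\SE_{(-)}$ and $\Kar_{(-)}$ computed inside $\CC'$ land inside the corresponding ones computed in $\CC$ (an iterated extension, resp.\ a retract, in $\CC'$ is one in $\CC$), and $\bigcup_{n\leqslant 0}\H'[n]\subset\bigcup_{n\leqslant 0}\H[n]$; hence the inclusion of the non-positive parts, and symmetrically for the non-positive... for the $\geqslant 0$ parts with $\leqslant$ replaced by $\geqslant$. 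In case $(b)$ one uses instead the description of Remark \ref{rm.thm.constr}, namely $\CCn{0}=\Kar_\CC\left(\SE_\CC\left(\Kar_\CC\left(\bigcup_{n\leqslant0}\H^\oplus[n]\right)\right)\right)$, noting $(\H')^\oplus\subset\H^\oplus$; cases $(a_\infty)$ and $(b_\infty)$ are identical using the formula $\CCn{0}=\Kar\left(\SE_\CC^\infty\left(\Kar\left(\bigcup_{n\leqslant0}\H^\oplus[n]\right)\right)\right)$ of Remark \ref{rm.thm.constr2} and the monotonicity of $\SE^\infty_{(-)}$ in the subcategory. In every case the two defining generators are nested, and all closure operations involved ($[n]$, $(-)^\oplus$, $(-)^\infty$, $\SE$, $\SE^\infty$, $\Kar$) are monotone and compatible with the inclusion $\CC'\subset\CC$, so $\CC'_{w'\leqslant 0}\subset\CCn{0}$ and $\CC'_{w'\geqslant 0}\subset\CCp{0}$, i.e.\ $w'=\restr{w}{\CC'}$.

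The one point deserving care — and the main (mild) obstacle — is that these closure operations be computed \emph{compatibly} in $\CC'$ and in $\CC$: that a distinguished triangle of $\CC'$ really is a distinguished triangle of $\CC$ (true, since $\CC'$ is by hypothesis a triangulated subcategory, being one of $\CA{\H'}$, $\CAep{\H'}$, etc.), and in cases $(b)$, $(b_\infty)$ that idempotents splitting in $\CC'$ split the same way in $\CC$ (true because $\CC'=\CAep{\H'}$ resp.\ $\CAepinf{\H'}$ is pseudo-abelian and a full subcategory of the pseudo-abelian $\CC$, so its retracts are computed the same). Once this is noted the inclusions are formal. I do not expect to need the orthogonality hypotheses beyond what \ref{thm.constr}, \ref{thm.constr2} already guarantee; condition $(ii)$ for $\H$ was used only to invoke those theorems, and its inheritance by $\H'$ was the first step above.
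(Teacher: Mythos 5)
Your proposal is correct and follows essentially the same route as the paper, which gives no separate argument for this corollary beyond noting that it follows from Theorems \ref{thm.constr} and \ref{thm.constr2} together with the explicit descriptions of the weight structures (Remarks \ref{rm.thm.constr} and \ref{rm.thm.constr2}): you verify that $\H'$ inherits condition $(ii)$, apply the construction theorems to $(\CC',\H')$, and use the monotonicity of the closure operations $\SE$, $\SE^\infty$, $\Kar$, $(-)^\oplus$ under the full inclusion $\CC'\subset\CC$ to get $w$-exactness of the inclusion. This is exactly the intended argument, and your attention to triangles and idempotent splittings being computed compatibly in $\CC'$ and $\CC$ is the right point to flag.
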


\section{Les motifs de Beilinson en dix leçons.}
\addcontentsline{toc}{section}{\protect\numberline{\thesection} Les motifs de Beilinson en dix leçons.}

Dans la suite on choisit de se placer dans la catégorie des \SW{motifs de Beilinson} (\cite[déf. 13.2.1]{CD})
$$\DMB{S}$$
où $S$ désigne un schéma de base (de type fini au dessus de $B$ ; \cf introduction). 
Elle peut se définir à partir des faisceaux étales (sur le site $(Sm/S)_{\text{\'et}}$) à coefficients rationnels (\cite[thm. 15.2.16]{CD}) : on considère la catégorie dérivée de cette catégorie de faisceaux. Dans cette catégorie on veut identifier $X$ à $\AAA^1_X$. Ce procédé s'appelle la $\AAA^1$-localisation (\cite[déf. 5.2.16]{CD}). Avec cette localisation on obtient la catégorie ``effective'' des motifs de Beilinson (\cite[ex. 5.2.17]{CD}); cette catégorie effective est monoïdale symétrique (\cite[prop. 5.2.2]{CD}). Pour arriver à $\DMB{S}$ on inverse (pour le produit tensoriel) le twist de Tate, noté $\Un_S(1)$ (\cite[déf. 5.3.22, ex. 5.3.34]{CD}).

Pour $S=\Spec(k)$ (et de manière générale lorsque $S$ est géométriquement unibranche) il existe une équivalence de catégorie entre la catégorie des motifs de Beilinson et la catégorie des motifs à la Voevodsky (construit avec les faisceaux avec transferts) à coefficients rationnels (\cite[thm. 15.1.4]{CD}).

Voici une liste des propriétés  de la catégorie des motifs de Beilinson, $f:\morph{S}{T}$ désignant un morphisme de schémas : 
\begin{description}
\item[1.] On a les six opérations de Grothendieck : issu du foncteur de restriction, on a le foncteur $f^* : \morph{\DMB{T}}{\DMB{S}}$
qui admet un adjoint à droite $f_*$. Par exemple, en notant $\Un_S$ l'unité pour le produit tensoriel (issu du faisceau constant sur $S$ qui vaut $\Q$), on a $f^*\Un_T=\Un_S$. Dans le cas ou $f$ est lisse, $f^*$ admet également un adjoint à gauche
$f_\sharp : \morph{\DMB{S}}{\DMB{T}}$
(issu du foncteur d'oubli de la base). Partant du foncteur de prolongement par zéro, on a 
$f_! : \morph{\DMB{S}}{\DMB{T}}$
qui admet un adjoint à droite $f^!$. En particulier si $f$ est propre $f_!=f_*$ (\cite[thm. 2.2.14.$(1)$]{CD}). La catégorie $\DMB{S}$ est monoïdale symétrique fermée ; on notera $\otimes_S$ le produit tensoriel et $\HHom_S$ son adjoint à droite. \`A noter enfin la formule de projection (\cite[thm. 2.4.21.$v$]{CD}) : pour tout $M\in \DMB{S}, N\in \DMB{T}$, $f_!M\otimes_TN=f_!(M\otimes_Sf^*N)$.
\item[2.] On a des isomorphismes de changement de base. Précisement, pour tout carré cartésien
$$\xymatrix{X'\cartesien\ar[d]_{\beta'}\ar[r]^{\alpha'}&Y'\ar[d]^\beta\\
X\ar[r]_\alpha&Y}$$
on a $\beta^*\alpha_!=\alpha'_!\beta'^*$ et $\beta'_*\alpha'^!=\alpha^!\beta_*$ (\cite[thm. 2.2.14.$(4c)$]{CD}).
\item[3.] Si $f$ est lisse de dimension relative $d$ on a un isomorphisme de pureté relative (\cite[thm. 2.4.15.$(iii)$, rm. 2.4.16]{CD}): 
$$f^!\Un_T=f^*\Un_T(d)[2d]=\Un_S(d)[2d], \qquad f_!\Un_T=f_\sharp\Un_T(-d)[-2d].$$
\item[4.] Si $f$ est une immersion fermée de codimension $c$ entre schémas réguliers on a un isomorphisme de pureté absolue (\cite[thm. 13.4.1]{CD}):
$$f^!\Un_T=\Un_S(-c)[-2c]$$
\item[5.] Si $U$ est un ouvert de $S$ de fermé complémentaire $Z$, alors en notant $j:\immop{U}{S}$ et $i:\immcl{Z}{S}$ les immersions canoniques, on a le triangle distingué de localisation (\cite[prop. 2.3.3.$(2)$, thm. 2.2.14.$(2)$]{CD})
$$\tridis{j_!\Un_U}{\Un_S}{i_!\Un_Z}$$
\item[6.] On a la $h$-descente : considérons le diagramme suivant, où les carrés sont cartésiens
$$\xymatrix{
Z'\immfer[r]\ar@/_/[rd]^a\ar[d]&T'\ar[d]^p&\immouv[l]\ar[d]U'\\
Z\immfer[r]_i&T&\immouv[l]U}$$
où $p$ est une altération de Galois de groupe $G$ telle que génériquement $\morph{T'/G}{T}$ est fini, surjectif et radiciel, $U$ est normal et $\morph{U'}{U}$ est fini alors on a le triangle distingué pour tout $M\in \DMBc{T}$ (\cite[thm. 14.3.7]{CD})
$$\tridis{M}{i_!i^*M\oplus \left(p_!p^*M\right)^G}{\left(a_!a^*M\right)^G}.$$
\item[7.] Si $S$ est régulier on a (\cite[cor. 13.2.14]{CD}) $$\forall (a,b)\in \Z^2,\;\Hom_{\DMB{S}}(\Un_S,\Un_S(a)[b])=\Gr_\gamma^a\KK_{2a-b}(S)_\Q,$$
où $\Gr_\gamma$ désigne le gradué pour la filtration $\gamma$ (\cite[\S 13.1]{CD}) et $\KK_n(S)_\Q:=\KK_n(S)\otimes_\Z\Q$ la $\KK$-théorie rationnelle de Quillen qui est nulle si $n<0$.
\item[8.] Lorsque $f$ est lisse, on pose $M_T(S):=f_\sharp\Un_S$ ; c'est le \SW{motif associé} à $S$. La catégorie des \SW{motifs constructibles} (\cite[déf. 1.4.7]{CD}) est $\DMBc{T}:=\CAep{\G_T}$ où  
$$\DMB{T}\supset\G_T:=\left\{M_T(S)(n)\big|n\in \Z, f:\morph{S}{T}\hbox{ lisse}\right\}.$$  La catégorie $\DMBc{T}$ correspond à la sous-catégorie pleine de $\DMB{T}$ formée des objets compacts $\DMB{T}_c$ (\cite[cor. 5.2.37]{CD}). \`A noter de plus que $\DMB{S}=\CAepinf{\DMBc{S}}$.
\item[9.] Les six opérations de Grothendieck respectent les objets constructibles (\cite[thm. 14.1.31]{CD}).
\item[10.] Les catégories $\DMB{S}$ et $\DMBc{S}$ sont pseudo-abéliennes : par construction $\DMB{S}$ est une catégorie triangulée admettant des petites sommes (voir  par exemple \cite[prop. 1.6.8]{Nee}). De même, par construction, la catégorie $\DMBc{S}$ est épaisse.
\end{description}

\begin{remarque}
A noter que le lecteur pourra également choisir de se placer dans la catégorie ${\mathbf{S}\mathbf{H}}_{\mathfrak{M}}$ (\cf \cite[déf. 4.5.52, 4.2.21]{Ayoub} avec $\mathfrak{M}$ la catégorie des $\Q$-espaces vectoriels ; la topologie étant la topologie étale). D'après \cite[thm. 15.2.16]{CD} celle-ci est équivalente à $\DMB{S}$. Une majeur partie  des propriétés précédentes est d'ailleurs prouvée intrinsèquement dans \cite{Ayoub}.
\end{remarque}

\section{Structure de poids et motifs.}
\addcontentsline{toc}{section}{\protect\numberline{\thesection} Structure de poids et motifs.}
Dans cette partie nous allons déterminer une structure de poids sur la catégorie des motifs de Beilinson et par restriction sur la catégorie des motifs de Beilinson constructibles. Pour cela nous allons utiliser les théorèmes de construction \ref{thm.constr} et \ref{thm.constr2}. Dans les deux cas il s'agit d'exhiber une catégorie satisfaisant la condition d'orthogonalité $(ii)$ de \textit{loc. cit.}. Le théorème clef est le suivant.

La notation $(rap.\; i)$ fait référence au rappel numéro $i$ de la section précédente.

\begin{lemme}[Lemme de Chow motivique]\label{Mot.Chow.lem}
Soit $p:\morph{X}{S}$ un morphisme propre à domaine régulier. Il existe un morphisme $\pi:\morph{X_0}{X}$ projectif à domaine régulier tel que 
\begin{description}
\item[$(i)$.] le morphisme composé $p\pi$ est projectif,
\item[$(ii)$.] le schéma $X_0$ a la même dimension que le schéma $X$,
\item[$(iii)$.] le motif $p_!\Un_X$ est un rétracte de $(p\pi)_!\Un_{X_0}$.
\end{description}
\end{lemme}

\begin{proof}
Appliquons le lemme de Chow au morphisme $p$ (\cf \cite[cor. 5.6.2]{EGA2}) : on trouve un morphisme $\pi':\morph{X'_0}{X}$ projectif tel que $p\pi'$ est projectif et tel que $X'_0$ a la même dimension que $X$. On considère une altération de Galois $\morph{X_0}{X'_0}$ et on note $\pi:\morph{X_0}{X}$ le morphisme composé ;
\begin{multicols}{3}
\noindent c'est un morphisme projectif, c'est à dire composé d'une immersion fermée et d'un morphisme lisse,
\columnbreak

$\xymatrix@R=0.2cm@C=1cm{X_0\ar[rr]^\pi\immfer[rd]_i&&X\\
&Y\ar[ru]_s&}$\\\\
via $(rap.\; 3)$ et $(rap.\; 4)$, on a
\columnbreak
\begin{eqnarray*}
\pi^!\Un_{X}&=& i^!s^!\Un_X\\
&=& i^!\Un_{Y}(c)[2c]\\
&=& \Un_{X_0}(c-c)[2c-2c]\\
&=& \Un_{X_0}.
\end{eqnarray*}
\end{multicols}

Considérons alors la composé d'adjonction $\alpha : \morphp{\Un_X}{\pi_*\pi^*\Un_X=\pi_!\Un_{X_0}=\pi_!\pi^!\Un_X}{\Un_X}$. On observe que $\alpha=d\cdot\Id_{\Un_X}$.
En effet, sans perdre en généralité, on peut supposer que $X$ et $X_0$ sont connexes ; pour n'importe quelle immersion ouverte $j:\morph{U}{X}$ le morphisme
$j^*:\morph{\Hom_{\DMBc{X}}(\Un_X,\Un_X)}{\Hom_{\DMBc{U}}(\Un_U,\Un_U)}$ est un isomorphisme (car $\Hom_{\DMBc{X}}(\Un_X,\Un_X)=\Hom_{\DMBc{U}}(\Un_U,\Un_U)=\Q$, \cf \cite[prop. 10.2.11.$(1)$]{CD}) de sorte que le changement de base propre $(rap.\; 2)$ permet de se ramener à prouver que $\alpha=d\cdot\Id_{\Un_X}$ pour n'importe quelle restriction de $\pi$.
\begin{multicols}{2}
$$\xymatrix{U_0\immouv[r]\ar[d]_{\pi_U}&X_0\ar[d]^\pi\\
U\immouv[r]_j&X}$$

\columnbreak

\noindent On peut trouver un ouvert $U$ de $X$ tel que $\pi_U$ est plat, fini de degré $d=[\kappa(X_0):\kappa(X)]$ ($\kappa(X)$ désignant le corps des fonctions de $X$ ; ceci est possible éssentielement parce que $\pi$ induit un morphisme plat fini de degré $d$ entre $\Spec(\kappa(X_0))$ et $\Spec(\kappa(X))$). La conclusion suit de \cite[prop. 12.7.6]{CD}.
\end{multicols}
Puisque les coéfficients sont rationnels, $d$ est inversible et donc $\Un_X$ s'identifie à un rétracte de $\pi_!\Un_{X_0}$. On conclut en composant par $p_!$ (qui est un foncteur additif).
\end{proof}

\begin{theoreme}\label{thm.clef}
Soit $f:\morph{T}{Y}$ un morphisme de schémas tel que $Y$ soit régulier. Alors 
$$\forall (a,b)\in \Z^2,\; b>2a,\quad \Hom_{\DMB{Y}}(f_!\Un_T,\Un_Y(a)[b])=0.$$
\end{theoreme}

\begin{proof}$ $
\begin{description}
\item[\'ETAPE 1 : L'énoncé est vrai pour les immersions fermées entre schémas réguliers.] Dans \\ ce cas on a
\begin{eqnarray*}
\Hom_{\DMB{Y}}(f_!\Un_T,\Un_Y(a)[b])&=&\Hom_{\DMB{T}}(\Un_T,f^!\Un_Y(a)[b])\\
&\overset{(rap.\; 4)}{\underset{a'=a-c,\ b'=b-2c}{=}}&\Hom_{\DMB{T}}(\Un_T,\Un_T(a')[b'])\\
&\overset{(rap.\; 7)}{=}&\Gr_\gamma^{a'}\KK_{2a'-b'}(T)_\Q\\
&\underset{2a'-b'<0}{=}&0.
\end{eqnarray*}
\item[\'ETAPE 2 : On peut supposer $T$ régulier.] Nous allons raisonner par récurrence sur la dimension de $T$. Pour cela on considère une altération de Galois comme dans $(rap.\; 6)$, qui existe en vertue de \cite[thm. 14.3.6]{CD}, avec $M=\Un_T$ pour obtenir le triangle distingué
$\tridis{\Un_T}{i_!\Un_Z\oplus \left(p_!\Un_{T'}\right)^G}{\left(a_!\Un_{Z'}\right)^G}$, où $T'$ est régulier.
En le composant par $f_!$  et en décalant on aboutit à 
$$\tridis{f_!\left(a_!\Un_{Z'}\right)^G[-1]}{f_!\Un_T}{(fi)_!\Un_Z\oplus f_!\left(p_!\Un_{T'}\right)^G}$$
On applique le foncteur cohomologique $\Hom_{\DMB{Y}}(\bullet,\Un_Y(a)[b])$ pour obtenir la suite exacte 
$$\xymatrix@R=0.5cm{
\Hom_{\DMB{Y}}((fi)_!\Un_Z,\Un_Y(a)[b])\times\Hom_{\DMB{Y}}(f_!\left(p_!\Un_{T'}\right)^G,\Un_Y(a)[b])\ar@{=}[d]\\
\Hom_{\DMB{Y}}((fi)_!\Un_Z\oplus f_!\left(p_!\Un_{T'}\right)^G,\Un_Y(a)[b])\ar[d]\\
\Hom_{\DMB{Y}}(f_!\Un_T,\Un_Y(a)[b])\ar[d]\\
\Hom_{\DMB{Y}}(f_!\left(a_!\Un_{Z'}\right)^G[-1],\Un_Y(a)[b])\ar@{=}[d]\\
\Hom_{\DMB{Y}}(f_!\left(a_!\Un_{Z'}\right)^G,\Un_Y(a)[b+1])\\}$$
La conclusion suit de l'hypothèse de récurrence et du fait que $T'$ soit régulier.
\item[\'ETAPE 3 : L'énoncé est vrai pour les morphismes projectifs.] $ $
\setlength{\columnseprule}{1pt} 
\begin{multicols}{2}
D'après l'étape $2$, on peut supposer que $T$ est régulier (dans la preuve de l'étape 2, les morphismes $p$, $i$ et $a$ sont projectifs, de sorte que l'on ne change pas la nature du morphisme $f$). 
On a une factorisation en une immersion fermée et un morphisme lisse, 
où $P$ est régulier (car $s$ est lisse). On obtient alors :
\columnbreak
$$\xymatrix@R=1.5cm@C=2cm{T\ar[rr]^f\immfer[dr]_c&&Y\\
&P\ar[ru]_s&}$$
\end{multicols}
\vspace{-1cm}
\begin{eqnarray*}
\Hom_{\DMB{Y}}(f_!\Un_T,\Un_Y(a)[b])&=&\Hom_{\DMB{Y}}(s_!c_!\Un_T,\Un_Y(a)[b])\\
&=&\Hom_{\DMB{P}}(c_!\Un_T,s^!\Un_Y(a)[b])\\
&\overset{(rap.\; 3)}{\underset{a'=a+d,\ b'=b+2d}{=}}&\Hom_{\DMB{P}}(c_!\Un_T,\Un_P(a')[b'])\\
&\overset{\text{\'Etape }1}{=}&0.
\end{eqnarray*}

\item[\'ETAPE 4 : L'énoncé est vrai pour les morphismes propres.] On se ramène au cas projectif par le lemme de Chow motivique.
\item[\'ETAPE 5 : Conclusion.] On choisit une compactification de $T$ (voir par exemple \cite[\S 4 thm. 2]{Nagata}) : 
\setlength{\columnseprule}{1pt} 
\begin{multicols}{2}
$$\xymatrix@R=1.5cm@C=2cm{T\ar[rd]_f\immouv[r]^j&\overline{T}\ar[d]^p&\partial\overline{T}\immfer[l]_i\ar[ld]^g\\
&Y&}$$

\columnbreak
On utilise 
$\tridis{j_!\Un_T}{\Un_{\overline{T}}}{i_!\Un_{\partial\overline{T}}}$
que l'on compose par $p_!$ et que l'on décale :
$$\tridis{g_!\Un_{\partial\overline{T}}[-1]}{f_!\Un_T}{p_!\Un_{\overline{T}}}$$
On applique $\Hom_{\DMB{Y}}(\bullet,\Un_Y(a)[b])$ pour conclure (via l'étape $5$ ; les morphismes $p$ et $g$ sont propres).
\end{multicols}
\end{description}
\end{proof}

\begin{theoreme}\label{cor.constr.WS.motifs}
Soit $$\DMB{S}\supset\H_S:=\left\{f_!\Un_X(x)[2x]\big|x\in \Z,\;f:\morph{X}{S} \textrm{ propre à domaine régulier}\right\}.$$
\begin{description}
\item[$(i)$. ] Il existe une unique structure de poids $W/\DMB{S}$ telle que $\H_S^\infty\subset\DMBe{S}$.
\item[$(ii)$. ] Il existe une unique structure de poids $w/\DMBc{S}$ telle que $\H_S\subset\DMBce{S}$. Précisément $\DMBce{S}=\Kar(\H_S^\oplus)$.
\item[$(iii)$. ] $w=\restr{W}{\DMBc{S}}$.
\end{description}
\end{theoreme}

\begin{proof}
On applique \ref{CORRRRRRR}$.(b_\infty)$.
\begin{description}
\item[$\bullet$] La catégorie $\H_S$ engendre $\DMBc{S}$ : \cite[cor. 14.3.9]{CD}. Donnons, pour le confort du lecteur, une idée de la preuve. Le foncteur $f_!$ respectant les objets constructibles $(rap.\; 9)$, on a $\H_S\subset\DMBc{S}$. Pour conclure, il suffit de voir que $\G_S\subset\CAep{\H_S}$ c'est à dire que $f_\sharp\Un_X(n)$ pour $f:\morph{X}{S}$ un morphisme lisse et $n\in \Z$, est dans $\CAep{\H_S}$. Mais $(rap.\; 3)$ permet de passer de $\sharp$ à $!$, le principe de l'étape $5$ de la preuve du théorème précédent permet de se ramener au cas propre et le principe de l'étape $2$ permet de se ramener au cas où le domaine est régulier. 

Ainsi $\CAep{\H_S}=\DMBc{S}$, ce qui implique par $(rap.\; 8)$,  $\CAepinf{\H_S}=\DMB{S}$.
\item[$\bullet$] Il faut voir que si $H_1$ et $H_2$ sont des objets de $\H_S$ et que $i\in \N_{>0}$ alors $\Hom_{\DMB{S}}(H_1,H_2[i])=0$. Mais de tels objets sont de la forme $f_!\Un_X(x)[2x]$, pour $f$ propre à domaine régulier et $x\in \Z$. On se ramène à calculer $\Hom_{\DMB{S}}(f_!\Un_X,g_!\Un_Y(a)[b])$ lorsque $b>2a$.
\begin{multicols}{2}
\begin{flushright}
$\Hom_{\DMB{S}}(f_!\Un_X,g_!\Un_Y(a)[b])$
\hspace*{-1.1cm}
\end{flushright}
$$\xymatrix@C=6cm@R=1.4cm{T\ar[r]^{f'}\ar[d]_{g'}\cartesien&Y\ar[d]^g\\
X\ar[r]_f&S}$$
\begin{eqnarray*}
&\overset{(rap.\; 1)}{=}&\Hom_{\DMB{S}}(f_!\Un_X,g_*\Un_Y(a)[b]) \\
&=&\Hom_{\DMB{X}}(\Un_X,f^!g_*\Un_Y(a)[b]) \\
&\overset{(rap.\; 2)}{=}&\Hom_{\DMB{X}}(\Un_X,g'_*f'^!\Un_Y(a)[b]) \\
&=&\Hom_{\DMB{T}}(\Un_T,f'^!\Un_Y(a)[b]) \\
&=&\Hom_{\DMB{Y}}(f'_!\Un_T,\Un_Y(a)[b])\\
&\overset{\ref{thm.clef}}{=}&0.
\end{eqnarray*}
\end{multicols}
\end{description}
La détermination exacte du c{\oe}ur suit du théorème \ref{thm.constr}.
\end{proof}

\begin{remarque}
On arrive au même résultat lorsqu'on demande que les objets de $\H_S$ proviennent de morphismes projectifs à domaine régulier. 
Ceci prouve en particulier que $W$ (resp. $w$) coïncide avec la structure de poids notée $w_{Chow}^{big}$ (resp. $w_{Chow}$) dans \cite[thm. 2.2.1]{Bo2}.

Lorsque $S=\Spec(k)$, $k$ désignant un corps de caractéristique $0$, on retrouve la structure de poids de \cite[\S 6.5]{Bo}.
\end{remarque}

\begin{remarque}\label{rm.importante}
Considérons les catégories suivantes
$$\DMBc{S}\supset\Neg_S:=\left\{f_!\Un_X(a)[b]\big|(a,b)\in \Z^2,\;b\leqslant 2a,\;f:\morph{X}{S} \textrm{ propre à domaine régulier}\right\}^\oplus.$$
$$\DMBc{S}\supset\Pos_S:=\left\{f_!\Un_X(a)[b]\big|(a,b)\in \Z^2,\;b\geqslant 2a,\;f:\morph{X}{S} \textrm{ propre à domaine régulier}\right\}^\oplus.$$
Alors par construction (\cf preuve de \ref{thm.constr} et \ref{thm.constr2} ainsi que les remarques \ref{rm.thm.constr} et \ref{rm.thm.constr2})
$$\DMBn{S}=\Kar\left(\SE_{\DMB{S}}^\infty\left(\Kar(\Neg_S)\right)\right),\qquad 
\DMBp{S}=\Kar\left(\SE_{\DMB{S}}^\infty\left(\Kar(\Pos_S)\right)\right).$$
$$\DMBcn{S}=\Kar\left(\SE_{\DMBc{S}}\left(\Kar(\Neg_S)\right)\right),\qquad \DMBcp{S}=\Kar\left(\SE_{\DMBc{S}}\left(\Kar(\Pos_S)\right)\right).$$
De plus l'orthogonalité forte, la remarque \ref{rem...}, le lemme \ref{orth.ext.stab} et le lemme \ref{orth.ext.stab.inf} nous donnent
$$\DMBp{S}={^\bot}\Neg_S[-1],\qquad \DMBcn{S}=\Pos_S[1]^\bot,\qquad\DMBcp{S}={{}^\bot}\Neg_S[-1].$$
On prendra garde que l'orthogonal de la première égalité se calcule dans $\DMB{S}$ tandis que les autres orthogonaux se calculent dans $\DMBc{S}$.
\end{remarque}

\begin{lemme}\label{proposition.negative}
Notons
$$\DMBc{S}\supset\G_S^-:=\left\{f_\sharp\Un_X(a)[b]\big|(a,b)\in \Z^2,\;b\leqslant 2a,\;f:\morph{X}{S} \textrm{ lisse}\right\}.$$
Alors on a $\Kar\left(\SE_{\DMBc{S}}\left(\Kar(\G_S^-)\right)\right)\subset\DMBcn{S}$.
\end{lemme}

\begin{proof}
D'aprés la remarque précédente, il suffit de voir que $\G_S^-$ est orthogonale à $\Pos[1]$ (où l'orthogonal est pris dans $\DMBc{S}$); il s'agit donc de prouver que 
$$\Hom_{\DMB{S}}(f_\sharp\Un_X,g_!\Un_Y(a)[b])=0$$ lorsque $a$ et $b$ sont des entiers tels que $b>2a$, $f:\morph{X}{S}$ est un morphisme lisse et $g:\morph{Y}{S}$ est un morphisme propre à domaine régulier. Mais 
\begin{multicols}{2}
\begin{flushright}
$\Hom_{\DMB{S}}(f_\sharp\Un_X,g_!\Un_Y(a)[b])$
\hspace*{-1.3cm}
\end{flushright}
$$\xymatrix@C=6cm@R=2cm{T\ar[r]^{f'}\ar[d]_{g'}\cartesien&Y\ar[d]^g\\
X\ar[r]_f&S}$$
\begin{eqnarray*}
&=&\Hom_{\DMB{X}}(\Un_X,f^*g_!\Un_Y(a)[b]) \\
&\overset{(rap.\; 2)}{=}&\Hom_{\DMB{X}}(\Un_X,g'_!f'^*\Un_Y(a)[b]) \\
&=&\Hom_{\DMB{X}}(\Un_X,g'_!\Un_T(a)[b]) \\
&=&\Hom_{\DMB{X}}(\Un_X,g'_*\Un_T(a)[b]) \\
&=&\Hom_{\DMB{T}}(g'^*\Un_X,\Un_T(a)[b]) \\
&=&\Hom_{\DMB{T}}(\Un_T,\Un_T(a)[b]) \\
&\overset{\ref{thm.clef}}{=}&0.
\end{eqnarray*} 
\end{multicols}
Le schéma $T$ est régulier car $Y$ est régulier et $f'$ est lisse.
\end{proof}

\`A présent nous allons établir les relations de $w$-exactitude des six opérations de Grothendieck. 

\begin{theoreme}\label{propo.compatibilite}
Soit $\alpha:\morph{S}{T}$ un morphisme de schémas.
\begin{description}
\item[$(i)$. ] Les foncteurs
$\alpha^* : \morph{\DMB{T}}{\DMB{S}}$ et $\alpha_! : \morph{\DMB{S}}{\DMB{T}}$
sont $w$-exactes à gauche.
\item[$(i')$. ] Les foncteurs
$\alpha_* : \morph{\DMB{S}}{\DMB{T}}$ et $\alpha^! : \morph{\DMB{T}}{\DMB{S}}$
sont $w$-exactes à droite.
\item[$(i_c)$. ] Les foncteurs
$\alpha^* : \morph{\DMBc{T}}{\DMBc{S}}$ et $\alpha_! : \morph{\DMBc{S}}{\DMBc{T}}$
sont $w$-exactes à gauche.
\item[$(i'_c)$. ] Les foncteurs
$\alpha_* : \morph{\DMBc{S}}{\DMBc{T}}$ et $\alpha^! : \morph{\DMBc{T}}{\DMBc{S}}$
sont $w$-exactes à droite.
\item[$(ii)$. ] Supposons que $\alpha$ soit lisse, alors le foncteur
$\alpha_\sharp : \morph{\DMB{S}}{\DMB{T}}$
est $w$-exacte à gauche.
\item[$(ii')$. ] Supposons que $\alpha$ soit lisse, alors le foncteur
$\alpha^* : \morph{\DMB{T}}{\DMB{S}}$
est $w$-exacte.
\item[$(ii_c)$. ]Supposons que $\alpha$ soit lisse, alors le foncteur
$\alpha_\sharp : \morph{\DMBc{S}}{\DMBc{T}}$
est $w$-exacte à gauche.
\item[$(ii'_c)$. ] Supposons que $\alpha$ soit lisse, alors le foncteur
$\alpha^* : \morph{\DMBc{T}}{\DMBc{S}}$
est $w$-exacte.
\item[$(iii)$. ] Soit $(n,n')\in \Z^2$. Le bifoncteur $\otimes_S : \morph{\DMB{S}\times\DMB{S}}{\DMB{S}}$ induit un bifoncteur\\ $\morph{\DMB{S}_{W\leqslant n}\times\DMB{S}_{W\leqslant n'}}{\DMB{S}_{W\leqslant n+n'}}$.
\item[$(iii')$. ] Soit $(n,p)\in \Z^2$. Le bifoncteur $\HHom_S:\morph{\DMB{S}^\opp\times\DMB{S}}{\DMB{S}}$ induit un bifoncteur $\morph{\DMB{S}^{\opp}_{W\leqslant n}\times\DMB{S}_{W\geqslant p}}{\DMB{S}_{W\geqslant p-n}}$.
\item[$(iii_c)$. ] Soit $(n,n')\in \Z^2$. Le bifoncteur $\otimes_S:\morph{\DMBc{S}\times\DMBc{S}}{\DMBc{S}}$ induit un bifoncteur $\morph{\DMBc{S}_{w\leqslant n}\times\DMBc{S}_{w\leqslant n'}}{\DMBc{S}_{w\leqslant n+n'}}$.
\item[$(iii'_c)$. ] Soit $(n,p)\in \Z^2$. Le bifoncteur $\HHom_S:\morph{\DMBc{S}^\opp\times\DMBc{S}}{\DMBc{S}}$ induit un bifoncteur $\morph{\DMBc{S}^{\opp}_{w\leqslant n}\times\DMBc{S}_{w\geqslant p}}{\DMBc{S}_{w\geqslant p-n}}$.
\item[$(iv)$. ] Pour tout entier $n\in \Z$, le foncteur $\bullet\otimes_S\Un_S(n)[2n]:\morph{\DMB{S}}{\DMB{S}}$ est $w$-exacte.
\item[$(iv_c)$. ] Pour tout entier $n\in \Z$, le foncteur $\bullet\otimes_S\Un_S(n)[2n]:\morph{\DMBc{S}}{\DMBc{S}}$ est $w$-exacte.
\item[$(v)$. ] On a toujours $\Un_S\in \G_S^-\subset\DMBcn{S}$. De plus si $S$ est régulier alors $\Un_S\in\H_S\subset \DMBce{S}$.
\end{description}
\end{theoreme}

\begin{proof}
Le morphisme $\Id_S:\morph{S}{S}$ est lisse donc $\Un_S\in\G_S^-$. Si de plus $S$ est régulier alors $\Id_S$ est propre à domaine régulier donc $\Un_S\in \H_S$ ce qui prouve $(v)$. 

Soit $?\in \{i, ii, iii\}$. On démontre (\cf \cite[prop. 1.2.3.9]{Bo2}) qu'un adjoint à gauche est $w$-exacte à gauche si et seulement si l'adjoint à droite associé est $w$-exacte à droite ; ainsi l'énoncé $(?)$ (resp. $(?_c)$) équivaut à $(?')$ (resp. $(?'_c)$). L'énoncé $(?_c)$ (resp. $(?'_c)$) se déduit de $(?)$ (resp. $(?')$) par \ref{cor.constr.WS.motifs}$.(iii)$ et $(rap.\; 9)$. En conclusion, il suffit de montrer $(i')$,  $(ii')$, $(iii)$ et $(iv)$.
\begin{description}
\item[$(i')$. ] Soit $P\in \DMBp{S}$ ; on veut montrer que $\alpha_*P\in\DMBp{T}$. D'aprés la remarque \ref{rm.importante}, il suffit de voir que pour tout $f_!\Un_X(a)[b]\in \Neg_T$, $\Hom_{\DMB{T}}(f_!\Un_X(a)[b],\alpha_*P[1])=0$.\vspace{0cm}
\begin{multicols}{2}
\begin{flushright}
$\Hom_{\DMB{T}}(f_!\Un_X(a)[b],\alpha_*P[1])$
\hspace*{-1.2cm}
\end{flushright}
$$\xymatrix@C=5cm@R=0.5cm{Y\ar[d]_{f'}\ar[r]^{\alpha'}\cartesien&X\ar[d]^f\\
S\ar[r]_\alpha&T}$$
\begin{eqnarray*}
&=&\Hom_{\DMB{S}}(\alpha^*f_!\Un_X(a)[b],P[1])\\
&\overset{(rap.\; 2)}{=}&\Hom_{\DMB{S}}(f'_!\alpha'^*\Un_X(a)[b],P[1])\\
&=&\Hom_{\DMB{S}}(f'_!\Un_{Y}(a)[b],P[1])\\
&=&0.
\end{eqnarray*}
\end{multicols}
Pour la dernière égalité : en utilisant l'argument de l'étape $2$ de la preuve de \ref{thm.clef} (on applique le foncteur $\Hom_{\DMB{S}}(\bullet,P[1])$ pour la conclusion), on peut supposer que $Y$ est régulier. Dans ce cas $f'_!\Un_{Y}(a)[b]\in \Neg_S\subset\DMBn{S}=\DMBp{S}[1]^\bot$.
Pour le second foncteur on raisonne comme précédement : il suffit de montrer que pour tout $P\in \DMBp{T}$ et $f_!\Un_X(a)[b]\in \Neg_S$ on a $\Hom_{\DMB{S}}(f_!\Un_X(a)[b],\alpha^!P[1])=0$. Par adjonction il revient au même de montrer que $\Hom_{\DMB{T}}((\alpha f)_!\Un_X(a)[b],P[1])=0$. En utilisant le principe de l'étape $6$ (\cf \ref{thm.clef}), on peut supposer $\alpha f$ propre (et $X$ est toujours régulier ; l'étape 5 permet même de se ramener au cas où $f$ est quasi-projectif). Ainsi $(\alpha f)_!\Un_X(a)[b]\in \Neg_T\subset \DMBn{T}=\DMBp{T}[1]^\bot$.

\item[$(ii')$. ] On montre que $\alpha^*$ est $w$-exacte à droite. Soit $P\in \DMBp{T}$ ; comme pour le cas $(i)'$, il suffit de montrer que pour tout $f_!\Un_X(a)[b]\in \Neg_S$ on a $\Hom_{\DMB{S}}(f_!\Un_X(a)[b],\alpha^*P[1])=0$. Par adjonction il revient au même de montrer que $\Hom_{\DMB{T}}(\alpha_\sharp f_!\Un_X(a)[b],P[1])=0$. En utilisant la pureté relative $(rap.\; 3)$, on peut remplacer le symbole $\sharp$ par $!$ ; dans ce cas $a$ est remplacé par $a'=a+d$ et $b$ par $b'=b+2d$, où $d$ est la dimension relative de $\alpha$. D'après le point $(v)$ déjà prouvé, $\Un_X(a')[b']\in \G_X^-\subset\DMBcn{X}$. Par le point $(i)$, $(\alpha f)_!\Un_X(a')[b']\in \DMBn{T}$. La conclusion suit par orthogonalité.

\item[$(iv)$. ] Soit $P\in \DMBp{S}$. On va montrer que $P(n)[2n]\in \DMBp{S}$. Il suffit de voir que pour tout $f_!\Un_S(a)[b]\in \Neg_S$, $\Hom_{\DMB{S}}(f_!\Un_S(a)[b],P(n)[2n+1])=0$. Or ce groupe s'identifie à $\Hom_{\DMB{S}}(f_!\Un_S(a-n)[b-2n],P[1])$ et $f_!\Un_S(a-n)[b-2n]\in \Neg_S$.
Soit $N\in \DMBn{S}$. On va montrer que $N(n)[2n]\in \DMBn{S}$. Il suffit de voir que pour tout $P\in \DMBp{S}$, $\Hom_{\DMB{S}}(N(n)[2n],P[1])=0$. Or ce groupe s'identifie à $\Hom_{\DMB{S}}(N,P(-n)[-2n+1])$ et le raisonement précédent donne $P(-n)[-2n]\in \DMBp{S}$.

\item[$(iii)$. ] Soient $f_!\Un_X(a)[b]\in \Neg_S$, $N\in \DMBn{S}$ et $P\in \DMBp{S}$ alors, utilisant la formule de projection rappelée en $(rap.\; 1)$, on a $\Hom_{\DMB{S}}(f_!\Un_X(a)[b]\otimes_SN,P[1])=\Hom_{\DMB{S}}(f_!(\Un_X(a)[b]\otimes_Xf^*N),P)=\Hom_{\DMB{X}}(f^*N(a)[b],f^!P[1])=0$. Les points $(i)$ et $(iv)$ justifient que $f^*N(a)[b]\in \DMBn{X}$, $(i')$ justifie $f^!P\in \DMBp{X}$ ; la dernière égalité suit par orthogonalité. On a ainsi montré que, pour tout $N\in \DMBn{S}$, $\bullet\otimes_S N:\morph{\DMB{S}}{\DMB{S}}$ transforme les objets de $\Neg_S$ en objet de $\DMBp{S}[1]^\bot=\DMBn{S}$. On peut sans peine remplacer $\Neg_S$ par ses retractes, ses extensions et des sommes arbitraires reconstruisant ainsi $\DMBn{S}$. Nous avons ainsi montré que le produit tensoriel induit un bifoncteur  $\morph{\DMB{S}_{W\leqslant 0}\times\DMB{S}_{W\leqslant 0}}{\DMB{S}_{W\leqslant 0}}$. Si $N\in \DMB{S}_{W\leqslant n}$ et $N'\in \DMB{S}_{W\leqslant n'}$ alors $N[-n], N'[-n']\in \DMBn{S}$ ce qui implique par le raisonnement précédent $N\otimes_S N'[-n-n']=N[-n]\otimes_SN'[-n]\in \DMBn{S}$ soit encore $N\otimes_S N'\in \DMB{S}_{W\leqslant n+n'}$.
\end{description}
\end{proof}

\begin{corollaire} 
Soient $f:\morph{X}{S}$ un morphisme de schémas, $n\in \Z$ et $P\in \DMBp{S}$. Le foncteur 
$\HHom(\bullet,f^!P) : \morph{\DMB{S}^\opp}{\DMB{S}}$
induit un foncteur $\morph{\DMB{S}^{\opp}_{W\leqslant n}}{\DMB{S}_{W\geqslant -n}}$. 
Si de plus $P$ est constructible alors il induit également un foncteur $\morph{\DMBc{S}^{\opp}_{w\leqslant n}}{\DMBc{S}_{w\geqslant -n}}$.

C'est en particulier le cas pour le foncteur de dualité local (\cf \cite[\S 14.3.30]{CD}).
\end{corollaire}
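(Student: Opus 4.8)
Le plan est de tout d\'eduire du th\'eor\`eme \ref{propo.compatibilite}, apr\`es avoir d\'etermin\'e le poids de l'objet $f^!P$ (qui vit dans $\DMB{X}$, et non dans $\DMB{S}$). On utilisera d'abord le point $(i')$ de \ref{propo.compatibilite}, qui montre que $f^!$ est $w$-exacte \`a droite : comme $P\in\DMBp{S}$, on en d\'eduit $f^!P\in\DMBp{X}$. On appliquera alors le point $(iii')$ du m\^eme th\'eor\`eme, lu avec $p=0$ : il affirme pr\'ecis\'ement que $\HHom_X$ envoie $\DMB{X}^{\opp}_{W\leqslant n}\times\DMBp{X}$ dans $\DMB{X}_{W\geqslant -n}$. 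En sp\'ecialisant la seconde variable \`a $f^!P$ on obtiendra le foncteur annonc\'e $\morph{\DMB{X}^{\opp}_{W\leqslant n}}{\DMB{X}_{W\geqslant -n}}$.

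Pour la variante constructible, on refera le m\^eme raisonnement dans $\DMBc{X}$. Il faudra observer que $f^!$ respecte les objets constructibles ($(rap.\; 9)$), de sorte que $f^!P\in\DMBc{X}$ ; le point $(iii)$ de \ref{cor.constr.WS.motifs} (\ie $w=\restr{W}{\DMBc{X}}$), joint au point $(i'_c)$ de \ref{propo.compatibilite}, donnera $f^!P\in\DMBcp{X}$, puis le point $(iii'_c)$ de \textit{loc. cit.} permettra de conclure comme ci-dessus.

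Enfin, le foncteur de dualit\'e locale de \cite[\S 14.3.30]{CD} n'est autre que le cas particulier de ce qui pr\'ec\`ede o\`u $f:\morph{X}{S}$ est un morphisme vers un sch\'ema r\'egulier $S$ et $P=\Un_S(c)[2c]$ : dans ce cas $f^!P$ est pr\'ecis\'ement le complexe dualisant de $X$, et $P$ est de poids $0$ --- donc $P\in\DMBp{S}$ --- d'apr\`es les points $(v)$ et $(iv)$ de \ref{propo.compatibilite} ; l'assertion sur la dualit\'e locale en r\'esultera, y compris dans le cadre constructible puisque $P$ y est constructible.

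Aucune difficult\'e technique s\'erieuse n'est \`a pr\'evoir : tout le contenu est d\'ej\`a dans le th\'eor\`eme \ref{propo.compatibilite}. Les seuls points demandant un peu de soin seront : ne pas confondre le sch\'ema sur lequel vit chaque objet (la source $X$ de $f$ plut\^ot que son but $S$) ; lire le poids de $f^!P$ \`a partir de la $w$-exactitude \`a \emph{droite} (et non \`a gauche) de $f^!$ ; et, pour l'assertion de dualit\'e, identifier correctement le complexe dualisant et v\'erifier qu'il est de poids positif.
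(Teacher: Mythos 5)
Your argument is correct and is essentially the paper's own (very terse) proof: right $w$-exactness of $f^!$ (points $(i')$, $(i'_c)$ of Theorem \ref{propo.compatibilite}) places $f^!P$ in weights $\geqslant 0$, and then $(iii')$, $(iii'_c)$ applied with $p=0$ yield the announced functors, the local duality case following because the dualizing object is a twist $\Un(c)[2c]$ over a regular base, of weight $0$ by $(v)$ and $(iv)$. The only divergence is notational (whether the ambient scheme is the source or the target of $f$ in the statement), with no mathematical impact.
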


\begin{proof}
C'est un cas particulier de $(iii')$, $(iii'_c)$ du théorème précédent en appliquant $(i')$, $(i'_c)$.
\end{proof}

\begin{proposition}
Supposons que $S$ soit régulier. Soit $$\DMB{S}\supset\L_S:=\left\{f_!\Un_X(x)[2x]\big|x\in \Z,\;f:\morph{X}{S} \textrm{ lisse et propre}\right\}.$$
Notons $\DMBcL{S}:=\CAep{\L_S}$ la catégorie des motifs lisses de Levine (\cf \cite{levine}). Alors il existe $\Lev/\DMBcL{S}$ une structure de poids telle que $\Lev=\restr{w}{\DMBcL{S}}$.
\end{proposition}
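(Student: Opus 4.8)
This is a proof *plan* for the final Proposition.

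\medskip

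The plan is to invoke Corollary \ref{Pour.Levine} with the right input data. Recall that we already have, by Theorem \ref{cor.constr.WS.motifs}, the weight structure $w/\DMBc{S}$ on constructible Beilinson motives, coming from the generating category $\H_S$ via case $(b)$ of Theorem \ref{thm.constr} (so $\DMBc{S}=\CAep{\H_S}$ and $\H_S$ satisfies the orthogonality condition $(ii)$ of \ref{thm.constr}). To apply \ref{Pour.Levine} in its case $(b)$ with $\CC=\DMBc{S}$, $\H=\H_S$, and $\CC'=\DMBcL{S}$, I must exhibit a full subcategory $\H'_S\subset\H_S$ such that $\CAep{\H'_S}=\DMBcL{S}=\CAep{\L_S}$; then \ref{Pour.Levine} produces $\Lev/\DMBcL{S}$ with $\Lev=\restr{w}{\DMBcL{S}}$.

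First I would set $\H'_S:=\{\,f_!\Un_X(x)[2x]\mid x\in\Z,\ f:\morph{X}{S}\ \text{lisse et propre}\,\}$. Since $S$ is regular and $f$ is smooth, $X$ is regular, so $f$ is proper with regular domain; hence $\H'_S\subset\H_S$, as required. Moreover, when $f$ is smooth \emph{and} proper, relative purity $(rap.\;3)$ gives $f_!\Un_X=f_\sharp\Un_X(-d)[-2d]$ where $d=\dim X/S$, so $f_!\Un_X(x)[2x]=f_\sharp\Un_X(x-d)[2(x-d)]=M_S(X)(x-d)[2(x-d)]$. Thus $\H'_S$ consists exactly of Tate twists-and-even-shifts of motives $M_S(X)$ of smooth proper $X/S$; this is precisely the generating family $\L_S$ of Levine's category (up to the harmless bijective reindexing $x\mapsto x-d$). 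Therefore $\CAep{\H'_S}=\CAep{\L_S}=\DMBcL{S}$.

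With this identification in hand, the proposition follows immediately from Corollary \ref{Pour.Levine} (case $(b)$): there exist weight structures on $\DMBc{S}$ and on $\DMBcL{S}=\CAep{\H'_S}$, the former being the $w$ of Theorem \ref{cor.constr.WS.motifs} by uniqueness, the latter being a structure $\Lev$ with $\Lev=\restr{w}{\DMBcL{S}}$. I expect the only real point requiring care is the bookkeeping in the previous paragraph: checking that the twisted shifts $f_!\Un_X(x)[2x]$ range, via purity, over exactly the same full subcategory of $\DMB{S}$ as $\{M_S(X)(x)[2x]\}$ — i.e.\ that $\L_S$ as defined in the statement really does generate Levine's $\DMBcL{S}$ the way it is set up in \cite{levine}, and that no constructibility or regularity hypothesis is lost when passing from $f_!$ to $f_\sharp$ (it is not, since $X$ regular forces $f$ proper-with-regular-domain). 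Everything else is a direct citation of \ref{Pour.Levine}.
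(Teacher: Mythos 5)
Your proposal is correct and is essentially the paper's own argument: the paper's proof is simply a citation of Corollary \ref{Pour.Levine} in case $(b)$, applied with $\CC=\DMBc{S}=\CAep{\H_S}$, $\H=\H_S$ (which satisfies condition $(ii)$ by the proof of \ref{cor.constr.WS.motifs}), $\H'=\L_S$ and $\CC'=\CAep{\L_S}=\DMBcL{S}$, the only point to check being the inclusion $\L_S\subset\H_S$, which holds because $S$ regular and $f$ smooth force the domain $X$ to be regular --- exactly what you verify. Your middle paragraph rewriting $f_!\Un_X(x)[2x]$ as $M_S(X)(x-d)[2(x-d)]$ via relative purity is harmless but superfluous here, since your $\H'_S$ is literally the $\L_S$ of the statement and $\DMBcL{S}$ is \emph{defined} as $\CAep{\L_S}$.
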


\begin{proof}
\cf \ref{Pour.Levine}.
\end{proof}

\section*{Remerciements.}
\addcontentsline{toc}{section}{Remerciements.}
Mes remerciements les plus profonds vont à Jörg Wildeshaus pour m'avoir introduit à l'élégante théorie des motifs. Je le remercie également pour m'avoir suggéré un énoncé simple du lemme de Chow motivique (\ref{Mot.Chow.lem}). Je remercie Denis-Charles Cisinski pour m'en avoir indiqué la preuve. Je remercie également Frédéric Déglise pour toutes les discussions que nous avons entretenues. Je tiens tout particulièrement à remercier Bradley Drew pour m'avoir indiqué une preuve simple de \ref{propo.compatibilite}.$(iii_c)$ ainsi que pour sa patiente écoute et sa relecture scrupuleuse.

\addcontentsline{toc}{section}{Bibliographie}
\bibliographystyle{alpha}
\bibliography{Bibliographie}

\end{document}